\documentclass{article}
\usepackage{cite}
\usepackage{amsmath,amsthm,amsfonts,amssymb}
\usepackage{color}

\usepackage{ifthen}
\allowdisplaybreaks
\newtheorem{thm}{Theorem}[section]
\newtheorem{cor}[thm]{Corollary}
\newtheorem{lem}[thm]{Lemma}
\newtheorem{prop}[thm]{Proposition}
\newtheorem*{lemalt}{Lemma \ref{addexp1}$'\!\!$}
\theoremstyle{remark}
\newtheorem{rem}[thm]{Remark}
\numberwithin{equation}{section}

\newcommand{\parder}[3][Default]{
    \frac{\partial \ifthenelse{\equal{#1}{Default}}{}{^{#1}}#2}{
              \partial #3 \ifthenelse{\equal{#1}{Default}}{}{^{#1}}}}
\newcommand{\dder}[3][Default]{
    \frac{\mathrm{d} \ifthenelse{\equal{#1}{Default}}{}{^{#1}}#2}{
              \mathrm{d} #3 \ifthenelse{\equal{#1}{Default}}{}{^{#1}}}}
\newcommand{\jac}{{\mathcal J}}

\newcommand{\Mat}{\operatorname{Mat}}

\newcommand{\rk}{\operatorname{rk}}
\newcommand{\imp}{{\mathversion{bold}$\Rightarrow$} }

\newcommand{\C}{{\mathbb C}}

\newcommand{\N}{{\mathbb N}}

\newcommand{\tp}{{^{\mathrm t}}}

\newcommand{\subs}[3]{#1|_{\ifthenelse{\equal{#2}{X}}{}{#2=}#3}}
\newcommand{\subss}[3]{(#1)|_{\ifthenelse{\equal{#2}{X}}{}{#2=}#3}}
\newcommand{\bsubs}[4]{#2#1|_{\ifthenelse{\equal{#3}{X}}{}{#3=}#4}}
\newcommand{\bsubss}[4]{#1(#2#1)#1|_{\ifthenelse{\equal{#3}{X}}{}{#3=}#4}}
\newcommand{\bbsubs}[5]{#3#1|#2_{\ifthenelse{\equal{#4}{X}}{}{#4=}#5}}
\newcommand{\bbsubss}[5]{#1(#3#1)#1|#2_{\ifthenelse{\equal{#4}{X}}{}{#4=}#5}}
\newcommand{\wtilde}[1]{\tilde{#1}}

\begin{document}

\title{Polynomial maps with invertible sums of Jacobian matrices and of directional
derivatives\footnote{Supported by
NSF of China (No.11071097, No.11026039) and ``211 Project" and ``985 Project" of Jilin University}}
\author{Hongbo Guo, Michiel de Bondt,
Xiankun Du\footnote{Corresponding author}, Xiaosong Sun\\
School of Mathematics, Jilin University, Changchun 130012, China\\
Radboud University, Nijmegen, The Netherlands\footnote{Institute of second author}\\
Email: 0417ghb@163.com, debondt@math.ru.nl, duxk@jlu.edu.cn, \\ sunxs@jluedu.cn}

\maketitle

\begin{abstract} \noindent
Let $F: \mathbb{C}^n\rightarrow
 \mathbb{C}^m$ be a polynomial map with $\deg
F=d\geq 2$. We prove
that $F$ is invertible if $m = n$ and
$\sum^{d-1}_{i=1}\subss{\jac F}{X}{\alpha_i}$ is invertible for all
$i$, which is trivially the case for invertible quadratic maps.

More generally, we prove that for affine lines
$L = \{\beta + \mu \gamma \mid \mu \in \C\} \subseteq \C^n$ ($\gamma \ne 0$),
$F|_L$ is linearly rectifiable, if and only if
$\sum^{d-1}_{i=1}\subss{\jac F}{X}{\alpha_i} \cdot \gamma \ne 0$
for all $\alpha_i\in L$. This appears to be the case for
all affine lines $L$ when $F$ is injective and $d \le 3$.

We also prove that if $m = n$ and $\sum^{n}_{i=1}\subss{\jac F}{X}{\alpha_i}$ is invertible
for all $\alpha_i\in \mathbb{C}^n$, then $F$ is a composition of an invertible linear map
and an invertible polynomial map $X+H$ with linear part $X$,
such that the subspace generated by $\{\subss{\jac H}{X}{\alpha} \mid
\alpha\in\mathbb{C}^n\}$ consists of nilpotent matrices.
\end{abstract}

\noindent
\textbf{Keywords:} Jacobian matrices, Jacobian conjecture, polynomial embedding,
linearly rectifiable%

\noindent MSC(2000): 14R10, 14R15

\section{Introduction}

Denote by $\jac F$ the Jacobian matrix of a  polynomial map
$F:\mathbb{C}^n\rightarrow\mathbb{C}^n$.  The Jacobian conjecture
 states that $F$ is invertible if $\jac F$ is
invertible, or  equivalently if $\subss{\jac F}{X}{\alpha}$ is
invertible for all $\alpha \in \mathbb{C}^n$. The conjecture has been
reduced to polynomial maps
of the form $F=X+H$, where $H$ is homogeneous (of degree 3) and $\jac H$ is
nilpotent, by Bass, Connell and Wright in \cite{Bass}, and independently by
Yagzhev in \cite{Yagzhev}. Subsequent reductions are to the case where for
the polynomial map $F = X + H$ above, each component of $H$ is a cube of
a linear form, by Dru\.{z}kowski in \cite{Druzkowski}, and to the
case where $\jac H$ is symmetric, by De Bondt and Van den Essen in
\cite{Bondt}, but these reductions cannot be applied
simultaneously, see also \cite{BondtEssen}. 
More details about the Jacobian conjecture can be found
in \cite{Arno} and \cite{homokema}.

Invertibility of a polynomial map $F$ has been examined by several authors under certain conditions on the evaluated Jacobian matrices $\subss{\jac F}{X}{\alpha}, ~\alpha\in\mathbb{C}^n$. With an extra assumption
that $F-X$ is cubic homogeneous, Yagzhev proved in
 \cite{Yagzhev} that if $\subss{\jac F}{X}{\alpha_1}+\subss{\jac F}{X}{\alpha_2}$ is
  invertible for all $\alpha_1,\alpha_2\in \mathbb{C}^n$, then the polynomial map $F$
  is invertible. The Jacobian matrix $\jac H$ of a polynomial map $H$ is called strongly nilpotent if $\subss{\jac H}{X}{\alpha_1}\cdot\subss{\jac H}{X}{\alpha_2}\cdot\cdots\cdot
  \subss{\jac H}{X}{\alpha_n}=0$ for all $\alpha_i \in \mathbb{C}^n$.
  Van den Essen and Hubbers proved in \cite{Essen} that  $\jac H$ is strongly nilpotent if and only if there exists
$T\in GL_n(\mathbb{C})$ such that $T^{-1}\jac(H)T$ is strictly upper triangular, if and only if the
 polynomial map $F=X+H$ is linearly triangularizable (so $F$ is invertible).
 This result was generalized by Yu in \cite{Yu}, where he additionally observed
that $\jac H$ is already strongly nilpotent if
$\subss{\jac H}{X}{\alpha_1}\cdot\subss{\jac H}{X}{\alpha_2}\cdot\cdots\cdot
\subss{\jac H}{X}{\alpha_m}=0$ for some $m \in \N$.

In \cite{Sun}, Sun extended the notion of strong nilpotency and proved that a polynomial map $F=X+H$ is invertible if the Jacobian matrix $\jac H$ is \emph{additive-nilpotent}, i.e.\@ $\sum_{i=1}^m \subss{\jac H}{X}{\alpha_i}$ is nilpotent for each positive integer $m$
 and all $\alpha_i\in \mathbb{C}^n$,
 which generalizes results in \cite{Essen,wang,Yagzhev,Yu}.
Instead of looking at polynomial maps $F=X+H$ such that $\jac H$ is nilpotent, we look
at polynomial maps $F$ in general, and assume that
$\det \sum_{i=1}^{d-1} \subss{\jac F}{X}{\alpha_i} \ne 0$ for all
$\alpha_i\in \mathbb{C}^n$, where $d = \deg F$.
More generally, we only assume that $\sum_{i=1}^{d-1} \subss{\jac F}{X}{\alpha_i}
\cdot \gamma \ne 0$
and only for $\alpha_i\in \mathbb{C}^n$ which are collinear, where $\gamma \ne 0$ is
the direction of the line.

Observe that if $F=X+H$ is a polynomial map such that $\jac H$ is
additive-nilpotent, then $\sum_{i=1}^{m}\subss{\jac\wtilde{F}}{X}{\alpha_i}$ is
invertible for all $m \in \N$ and all $\alpha_i\in \mathbb{C}^n$,
where $\wtilde{F}=L_1\circ F\circ L_2$ is a composition of
$F$ and invertible linear maps $L_1$ and $L_2$. Conversely, it is interesting to
describe the polynomial maps such that sums of the evaluated
Jacobian matrices are invertible. In this paper, we first prove that
a polynomial map $F$ of degree $d$ is invertible if
$\sum_{i=1}^{d-1} \subss{\jac F}{X}{\alpha_i}$ is invertible for all $\alpha_i\in
\mathbb{C}^n$. This generalizes results of Wang in \cite{wang}, Yagzhev in
\cite{Yagzhev}, Van den Essen
  and Hubbers in \cite{Essen} and Sun in \cite{Sun}. Then we prove the invertibility of a polynomial map $F$ such that $\sum_{i=1}^{n}\subss{\jac F}{X}{\alpha_i}$ is invertible for all $\alpha_i\in \mathbb{C}^n$, and finally characterize such a polynomial map as a composition of  an invertible linear map and an invertible polynomial map $X+H$ such that $\jac H$ is additive-nilpotent.

\section{Additive properties of the derivative on lines}

\begin{lem} \label{addexp1}
Assume $\lambda_1 ,\lambda_2, \ldots, \lambda_{d-1} \in \C$ such that
$\sum_{i\in I} \lambda_i \ne 0$ for all nonempty $I \subseteq \{1,2,\ldots,d-1\}$,
and $P \in \C[[T]]$ with constant term $\lambda_1 + \lambda_2 + \cdots
+ \lambda_{d-1}$. Then there are $r_1, r_2, \ldots, \allowbreak r_{d-1}
\in \C$ such that
$$
P - \sum_{i=1}^{d-1} \lambda_i \exp(r_i T)
$$
is divisible by $T^d$, where $\exp(T) = \sum_{j=0}^{\infty}
\frac{1}{j!} T^j$.
\end{lem}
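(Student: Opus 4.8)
The plan is to translate the statement into a surjectivity assertion about an explicit polynomial map and then establish that surjectivity through properness. Put $q_0 = \lambda_1 + \cdots + \lambda_{d-1}$ and $q_k = k!\,[T^k]P$ for $1 \le k \le d-1$. Since $[T^k]\exp(r_iT) = r_i^k/k!$, requiring that $T^d$ divide $P - \sum_i \lambda_i \exp(r_i T)$ is the same as requiring $\sum_{i=1}^{d-1}\lambda_i r_i^k = q_k$ for $k = 0, 1, \ldots, d-1$, and the $k=0$ equation holds automatically by the hypothesis on the constant term of $P$. So it is enough to prove that the polynomial map
$$
\Phi \colon \C^{d-1} \to \C^{d-1}, \qquad
\Phi(r_1, \ldots, r_{d-1}) = \Bigl(\,\textstyle\sum_i \lambda_i r_i,\ \sum_i \lambda_i r_i^{\,2},\ \ldots,\ \sum_i \lambda_i r_i^{\,d-1}\,\Bigr)
$$
is surjective.

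First I would record that $\Phi$ is dominant. Pulling the scalar $k$ out of the $k$-th row and $\lambda_j$ out of the $j$-th column of $\jac\Phi$ leaves a Vandermonde matrix, so $\det \jac\Phi = (d-1)!\,\bigl(\prod_{i=1}^{d-1}\lambda_i\bigr)\prod_{i<j}(r_j - r_i)$. This is a nonzero polynomial because every $\lambda_i \ne 0$ (the singleton case of the hypothesis), so $\Phi$ has maximal rank at some point and its image contains a nonempty Zariski-open subset, hence is dense in the Euclidean topology.

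The main step — and the place where the full subset-sum hypothesis is needed — is that $\Phi$ is proper. Suppose $r^{(m)} \in \C^{d-1}$ with $t_m := \max_i |r_i^{(m)}| \to \infty$; after passing to a subsequence, $r_i^{(m)}/t_m \to c_i$ with $c = (c_1, \ldots, c_{d-1}) \ne 0$. For each $k$ one has $t_m^{-k}\,\Phi(r^{(m)})_k \to \sum_i \lambda_i c_i^{\,k}$, so if the sequence $\Phi(r^{(m)})$ were bounded, then $\sum_{i=1}^{d-1}\lambda_i c_i^{\,k} = 0$ for $k = 1, \ldots, d-1$. Let $v_1, \ldots, v_p$ be the distinct nonzero values among $c_1, \ldots, c_{d-1}$ (so $1 \le p \le d-1$ since $c \ne 0$) and $\Lambda_j = \sum_{i\,:\,c_i = v_j}\lambda_i$; then $\sum_{j=1}^p \Lambda_j v_j^{\,k} = 0$ for $k = 1, \ldots, p$, i.e.\@ $\bigl(v_j^{\,k-1}\bigr)_{k,j}\cdot\diag(v_1,\ldots,v_p)\cdot(\Lambda_1,\ldots,\Lambda_p)\tp = 0$. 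The Vandermonde factor and the diagonal factor are both invertible, so all $\Lambda_j = 0$; but each $\Lambda_j$ is the sum over the nonempty index set $\{i : c_i = v_j\}$, contradicting the hypothesis. Hence preimages of bounded sets are bounded, so (with continuity) $\Phi$ is proper.

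A proper continuous map is closed, so $\Phi(\C^{d-1})$ is closed; being also dense, it equals $\C^{d-1}$, which yields the lemma. (Alternatively: a proper holomorphic self-map of $\C^{d-1}$ has a well-defined topological degree, which is positive because $\jac\Phi$ is invertible somewhere, hence the map is onto.) I expect the properness argument to be the only genuinely delicate part; the reduction to $\Phi$ and the Jacobian computation are routine bookkeeping.
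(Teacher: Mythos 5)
Your proof is correct, and it reaches the conclusion by a genuinely different existence mechanism than the paper, even though the decisive computation is the same. The paper also reduces to the weighted power-sum system $\sum_i\lambda_i r_i^{\,k}=p_k$, but then homogenizes it with an extra variable $Y_d$, invokes Krull's Height Theorem to get a solution variety of dimension at least one through the origin (hence a nonzero solution), and rules out the ``point at infinity'' case $r_d=0$ by exactly your Vandermonde-applied-to-grouped-subset-sums argument. You instead prove surjectivity of the affine map $\Phi$ directly: dominance via the explicit factorization $\det\jac\Phi=(d-1)!\,(\prod_j\lambda_j)\prod_{i<j}(r_j-r_i)$ (which uses only the singleton case of the hypothesis), and properness via the rescaling limit $c=\lim r^{(m)}/t_m$ --- and your contradiction at the limit direction $c$ is, term for term, the paper's contradiction at $r_d=0$. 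So the two proofs are dual in a precise sense: your ``escape to infinity'' is the paper's projective point at infinity. What your route buys is avoidance of Krull's Height Theorem at the cost of some point-set topology (subsequences, properness implies closedness, closed plus dense equals everything, or the degree argument); the paper's route stays purely algebraic. Both are complete; your argument has no gaps, though you could note explicitly that $\max_i|c_i|=1$ guarantees $c\ne 0$, which you use when asserting $p\ge 1$.
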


\begin{proof}
Write
$$
P = \sum_{j=0}^{\infty} \frac{p_j}{j!} T^i
$$
Then we must find a solution $(Y_1,Y_2,\ldots, Y_{d-1}) = (r_1,r_2,\ldots,r_{d-1})
\in \C^{d-1}$ of
\begin{equation}
\sum_{i=1}^{d-1} \lambda_i Y_i^j = p_j \quad (j = 0,1,\ldots,d-1) \label{eq1}
\end{equation}
The equation for $j = 0$ is fulfilled by assumption, and finding a solution of (\ref{eq1})
is the same as finding a solution $(Y_1,Y_2,\ldots, Y_d) = (r_1,r_2,\ldots,r_d)$ of
\begin{equation}
\sum_{i=1}^{d-1} \lambda_i Y_i^j = p_j Y_d^j \quad (j = 1,\ldots,d-1) \label{eq2}
\end{equation}
for which $r_d = 1$. Since $(Y_1,Y_2,\ldots, Y_d) = 0$ is a solution of (\ref{eq2}),
it follows from Krull's Height Theorem that the dimension of the set of solutions $(r_1,r_2,\ldots,r_d) \in \C^d$ of (\ref{eq2}) is at least one. Hence
there exists a nonzero solution $(r_1,r_2,\ldots,r_d) \in \C^d$ of (\ref{eq2}).

If $r_d \ne 0$, then $r_d^{-1}(r_1,r_2,\ldots,r_d)$ is a solution of (\ref{eq2})
as well, because the equations of (\ref{eq2}) are homogeneous. Hence
$r_d^{-1}(r_1,r_2,\ldots,r_{d-1})$ is a solution of (\ref{eq1}) in that case.
So assume that $r_d = 0$. Then $\sum_{i=1}^{d-1} \lambda_i r_i^j = 0$ for all $j$.
Take $e \le d-1$ and nonzero
$s_1 < s_2 < \cdots < s_e$ such that $\{0,r_1, r_2, \ldots, r_{d-1}\} = \{0,s_1, s_2,
\ldots,s_e\}$. Then $e \ge 1$ because $(r_1,r_2,\ldots,r_d) \ne 0$, and
$$
0 = \sum_{i=1}^{d-1} \lambda_i r_i^j
= \sum_{k=1}^e s_k^j \sum_{r_i = s_k} \lambda_i
$$
for all $j$ such that $1 \le j \le e$. This means that the vector $v$ defined by
$v_k := \sum_{r_i = s_k} \lambda_i$ for all $k$ satisfies
$M v = 0$, where $M$ is the Vandermonde matrix with entries $M_{jk} = s_k^j$.
Since $v_k$ is nonzero by assumption for all $k$, this
contradicts $\det M \ne 0$.
\end{proof}

\noindent
Let $f \in \C[X] = \C[X_1,X_2,\ldots, X_n]$ be a polynomial of degree $d$
and $\beta, \gamma \in \C^n$. Set $g(T) := f(\beta + T \gamma)$ and
$D := \sum_{i=1}^n \gamma_i \parder{}{X_i}$. Notice that $T \mapsto D$
induces an isomorphism of $\C[T]$ and $\C[D]$. By the chain rule,
\begin{align*}
\dder[i]{}{T} \big(f(\beta + T \gamma)\big) &= \dder[i-1]{}{T}
\big(\subss{\jac f}{X}{\beta + T \gamma} \cdot \gamma\big) \\
&= \dder[i-1]{}{T} \big((D f) (\beta + T \gamma)\big) = (D^i f) (\beta + T \gamma)
\end{align*}
follows for all $i \in \N$ by induction on $i$.
Using the Taylor series at $0$ of $g$, we see that for all $c \in \C$,
\begin{align}
f(\beta + c \gamma) = g(c) &= \sum_{i=0}^{\infty} \frac{(c-0)^i}{i!}
\bsubss{\bigg}{\dder[i]{}{T} g(T) }{T}{0} \nonumber \\
&= \sum_{i=0}^{\infty} \frac{c^i}{i!}
\bsubss{\bigg}{\dder[i]{}{T} f(\beta + T \gamma) }{T}{0} \nonumber \\
&= \bsubs{\bigg}{\sum_{i=0}^{\infty} \frac{c^i}{i!}
\big((D^i f)(\beta + T \gamma) \big)}{T}{0} \nonumber \\
&= \bsubs{\Big}{\bsubss{\big}{(\exp c D) f}{X}{\beta + T \gamma}}{T}{0}
= \bsubss{\big}{(\exp c D) f}{X}{\beta} \label{taylor}
\end{align}

\begin{prop} \label{lineinj}
Let $F: \C^n \rightarrow \C^m$ be a polynomial map of degree $d$ and
$\lambda_i \in \C$ for all $i$, such that
$\sum_{i\in I} \lambda_i \ne 0$ for all nonempty
$I \subseteq \{1,2,\ldots,d-1\}$.
Assume $\beta, \gamma \in \C^n$ such that $\gamma \ne 0$.
If every sum of $d-1$ directional derivatives of $F|_{\beta + \C \gamma}$
along $\gamma$ is nonzero ($\lambda_i = 1$ for all $i$ below), or more
generally,
$$
\sum_{i=1}^{d-1} \lambda_i \cdot \subss{\jac F}{X}{\alpha_i} \cdot \gamma \ne 0
$$
for all $\alpha_i \in \{ \beta + \mu \gamma \mid \mu \in \C\}$, then
$F(\beta) \ne F(\beta + \gamma)$.
\end{prop}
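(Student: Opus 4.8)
The plan is to reduce everything to the affine line $\beta + \C\gamma$ and then apply Lemma~\ref{addexp1} to a carefully chosen power series. I would first set $g(T) := F(\beta + T\gamma) \in \C^m[T]$; since $\deg F = d$, every entry of $g$ has $T$-degree at most $d$, and by the chain rule recorded above, $g'(T) = \subss{\jac F}{X}{\beta + T\gamma}\cdot\gamma$ is a vector of polynomials of degree at most $d-1$, say $g'(T) = \sum_{j=0}^{d-1} a_j T^j$ with $a_j \in \C^m$. Termwise integration gives $g(1) - g(0) = \sum_{j=0}^{d-1}\frac{1}{j+1}\,a_j$, while the hypothesis says exactly that $\sum_{i=1}^{d-1}\lambda_i\, g'(r_i) \ne 0$ for every choice of $r_1,\dots,r_{d-1}\in\C$ (take $\alpha_i = \beta + r_i\gamma$, which lies on the line). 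Hence it suffices to produce nodes $r_1,\dots,r_{d-1}$ for which $\sum_{i=1}^{d-1}\lambda_i\,g'(r_i)$ is a nonzero scalar multiple of $g(1)-g(0)$; then $F(\beta) = F(\beta+\gamma)$ would force $\sum_i\lambda_i g'(r_i)$ to vanish, contradicting the hypothesis.

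The core step is choosing that series. Put $s := \lambda_1 + \cdots + \lambda_{d-1}$, which is nonzero because $I = \{1,\dots,d-1\}$ is an admissible index set in the hypothesis, and apply Lemma~\ref{addexp1} to
$$
P(T) := s\cdot\frac{\exp(T)-1}{T} = s\sum_{j=0}^{\infty}\frac{1}{(j+1)!}\,T^j \in \C[[T]],
$$
which has constant term $s = \lambda_1 + \cdots + \lambda_{d-1}$, as required by the lemma. This yields $r_1,\dots,r_{d-1} \in \C$ with $P - \sum_{i=1}^{d-1}\lambda_i\exp(r_iT)$ divisible by $T^d$. Comparing coefficients of $T^0,T^1,\dots,T^{d-1}$ gives $\frac{s}{(j+1)!} = \sum_{i=1}^{d-1}\lambda_i\frac{r_i^{\,j}}{j!}$, i.e.\ $\sum_{i=1}^{d-1}\lambda_i r_i^{\,j} = \frac{s}{j+1}$ for $j=0,1,\dots,d-1$.

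Substituting these nodes into $g'$ then yields
$$
\sum_{i=1}^{d-1}\lambda_i\,g'(r_i) = \sum_{j=0}^{d-1}a_j\sum_{i=1}^{d-1}\lambda_i r_i^{\,j} = \sum_{j=0}^{d-1}\frac{s}{j+1}\,a_j = s\big(g(1)-g(0)\big).
$$
By hypothesis the left-hand side is a nonzero vector and $s \ne 0$, so $g(1) \ne g(0)$, that is $F(\beta) \ne F(\beta+\gamma)$. The displayed special case $\lambda_1 = \cdots = \lambda_{d-1} = 1$ (sums of $d-1$ directional derivatives along $\gamma$) is automatically covered, since then $\sum_{i\in I}\lambda_i = |I| \ne 0$ for every nonempty $I$.

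The only genuine obstacle I anticipate is guessing the right input to Lemma~\ref{addexp1}. One wants $g(1)-g(0) = \int_0^1 g'$, whose expansion in the monomial basis of $g'$ has coefficients $1/(j+1)$, so $(\exp(T)-1)/T$ is the natural generating function; multiplying by $s$ to hit the prescribed constant term is the small twist that makes the scalar $s$ appear uniformly across all matched coefficients and then cancel cleanly at the end. Everything else is routine bookkeeping.
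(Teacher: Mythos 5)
Your proof is correct and follows essentially the same route as the paper: both apply Lemma~\ref{addexp1} to the series $P(T)=\big(\sum_i\lambda_i\big)\,T^{-1}(\exp(T)-1)$ to produce nodes $r_i$ for which $\sum_i\lambda_i\,\subss{\jac F}{X}{\beta+r_i\gamma}\cdot\gamma$ equals $\big(\sum_i\lambda_i\big)\big(F(\beta+\gamma)-F(\beta)\big)$. The only difference is presentational --- you compare coefficients of $g'(T)$ in the monomial basis, while the paper packages the same computation via the operator $D=\sum_j\gamma_j\,\partial/\partial X_j$ and the Taylor identity~(\ref{taylor}).
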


\begin{proof}
Set $D := \sum_{i=1}^n \gamma_i \parder{}{X_i}$ and $P(T) := \big(\sum_{i=1}^{d-1}
\lambda_i\big) T^{-1}(\exp(T)-1)$. By (\ref{taylor}),
\begin{align*}
\bigg(\sum_{i=1}^{d-1} \lambda_i\bigg) \cdot
   \big(F_j(\beta + \gamma) - F_j(\beta)\big)
&= \bigg(\sum_{i=1}^{d-1} \lambda_i\bigg) \cdot
   \bsubss{\Big}{\big(\exp(D) - 1\big) F_j}{X}{\beta} \\
&= \bsubss{\big}{ D P(D) F_j}{X}{\beta} = \bsubss{\big}{ P(D) (D F_j)}{X}{\beta}
\end{align*}
for all $j$. Choose $r_i$ as in Lemma \ref{addexp1} for all $i$.
From the definition of $D$ and (\ref{taylor}) with $c = r_i$ and $f = D F_j$,
\begin{align*}
\sum_{i=1}^{d-1} \lambda_i \cdot \subss{\jac F_j}{X}{\beta + r_i \gamma} \cdot \gamma
&= \sum_{i=1}^{d-1} \lambda_i(D F_j) (\beta + r_i \gamma) \\
&= \bsubss{\bigg}{\sum_{i=1}^{d-1} \lambda_i \exp(r_i D) (D F_j) }{X}{\beta}
\end{align*}
follows for all $j$. Since $P(T) - \sum_{i=1}^{d-1} \lambda_i \exp(r_i T)$
is divisible by $t^d$ and $D F_j$ has degree at most $d - 1$, we have
$$
P(D) (D F_j) =
\sum_{i=1}^{d-1} \lambda_i \exp(r_i D) (D F_j)
$$
for all $j$. By substituting $x = \beta$ on both sides, we obtain
$$
\bigg(\sum_{i=1}^{d-1} \lambda_i\bigg) \cdot \big(F_j(\beta + \gamma) - F_j(\beta)\big) =
\sum_{i=1}^{d-1} \lambda_i \subss{\jac F_j}{X}{\beta+r_i \gamma} \cdot \gamma
$$
for all $j$, which gives the desired result.
\end{proof}

\begin{cor} \label{allinj}
Let $F: \C^n \rightarrow \C^m$ be a polynomial map of degree $d$ and
$\lambda_i \in \C$ for all $i$, such that $\sum_{i\in I} \lambda_i \ne 0$
for all nonempty $I \subseteq \{1,2,\ldots,d-1\}$. If $\rk (\sum_{i=1}^{d-1}
\lambda_i \subss{\jac F}{X}{\alpha_i}) = n$ for all $\alpha_i \in \C^n$, then
$F$ is injective.

If additionally $n = m$, then $F$ is an invertible polynomial map.
\end{cor}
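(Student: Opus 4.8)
The plan is to derive the injectivity of $F$ directly from Proposition \ref{lineinj}. Suppose, for contradiction, that $F(\beta) = F(\beta')$ for some distinct $\beta, \beta' \in \C^n$, and set $\gamma := \beta' - \beta$, so that $\gamma \ne 0$. For arbitrary points $\alpha_1, \ldots, \alpha_{d-1}$ on the line $\{\beta + \mu \gamma \mid \mu \in \C\}$, the $m \times n$ matrix $M := \sum_{i=1}^{d-1} \lambda_i \subss{\jac F}{X}{\alpha_i}$ has rank $n$ by hypothesis, hence defines an injective linear map $\C^n \to \C^m$; since $\gamma \ne 0$ this forces $M\gamma \ne 0$. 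Thus the hypothesis of Proposition \ref{lineinj} is met for this choice of $\beta$ and $\gamma$, and the proposition gives $F(\beta) \ne F(\beta + \gamma) = F(\beta')$, a contradiction. Therefore $F$ is injective.

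For the second assertion (now $m = n$), I would first specialize all the $\alpha_i$ to a single point $\alpha \in \C^n$ and use $\sum_{i=1}^{d-1}\lambda_i \ne 0$ (the case $I = \{1,2,\ldots,d-1\}$) to obtain $\rk \subss{\jac F}{X}{\alpha} = n$ for every $\alpha$; as $m = n$ this means $\det \jac F$ has no zero on $\C^n$, so $\det \jac F \in \C^*$. Hence $F$ is an injective polynomial endomorphism of $\C^n$ whose Jacobian determinant is a nonzero constant. The conclusion then follows from the classical fact that such a map is a polynomial automorphism: injectivity implies surjectivity by the theorem of Ax and Grothendieck, so $F$ is bijective, and since $F$ is unramified everywhere the set-theoretic inverse is regular at every point of $\C^n$, hence polynomial. (Equivalently, one may simply invoke that an injective polynomial self-map of $\C^n$ is automatically invertible.)

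I do not expect a real obstacle here: essentially all of the content is carried by Proposition \ref{lineinj}. The two remaining ingredients, namely that a matrix of rank $n$ does not annihilate a nonzero vector in $\C^n$ and that an injective polynomial map $\C^n \to \C^n$ is an automorphism, are respectively trivial and classical. The only point deserving a moment's attention is that the $\alpha_i$ produced by a hypothetical failure of injectivity all lie on a single affine line, but this is harmless, since the rank hypothesis is assumed for every tuple $(\alpha_1,\ldots,\alpha_{d-1}) \in (\C^n)^{d-1}$.
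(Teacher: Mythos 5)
Your proof is correct and takes essentially the same route as the paper: injectivity follows by applying Proposition \ref{lineinj} with $\gamma = \beta' - \beta$, using that a rank-$n$ matrix cannot kill the nonzero vector $\gamma$. For the second assertion the paper simply cites the Cynk--Rusek theorem (an injective polynomial self-map of $\C^n$ is an automorphism), which is the alternative you mention parenthetically; your primary detour through $\det \jac F \in \C^*$ and Ax--Grothendieck is valid but unnecessary.
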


\begin{proof}
Assume $F(\beta) = F(\beta + \gamma)$ for some $\beta, \gamma \in \C^n$
By Proposition \ref{lineinj}, there are $\alpha_i \in \C^n$ such that
$$
\sum_{i=1}^{d-1} \lambda_i \cdot \subss{\jac F}{X}{\alpha_i} \cdot \gamma = 0
$$
and in particular
$\rk \big(\sum_{i=1}^{d-1} \lambda_i \cdot \subss{\jac F}{X}{\alpha_i}\big) \ne n$.

If $n = m$, then a special case of the Cynk-Rusek Theorem in \cite{Cynk} (see also
\cite[Lemma 3]{Yagzhev} and \cite{Borel})
tells us that $F$ is an invertible polynomial map in case it is injective,
which is the case here.
\end{proof}

\begin{rem} When $d=2$ or $d=3$, Corollary \ref{allinj} gives a result of Wang
\cite[Theorem 1.2.2]{wang} and one of Yagzhev \cite[Theorem 1(ii)]{Yagzhev},
respectively. Corollary \ref{allinj} also generalizes
\cite[Theorem 2.2.1, Corollary 2.2.2]{Sun}.
\end{rem}

\begin{rem}
Now you might think that for Theorem \ref{lineinj},
the condition that there are $d-1$ collinear $\alpha_i$'s with
the additive property therein is weaker than
a similar property for $s$ $\alpha_i$'s, where $s \in \N$
is arbitrary. This is however not the case.
\end{rem}

\begin{thm} \label{lineadd}
Let $F: \C^n \rightarrow \C^m$ be a polynomial map of degree $\le d$ and
$\beta, \gamma \in \C^n$. Then the following statements are equivalent.
\begin{enumerate}

\item[(1)]
There exists $\lambda_1 ,\lambda_2, \ldots, \lambda_{d-1} \in \C$ satisfying
$\sum_{i\in I} \lambda_i \ne 0$ for all nonempty $I \subseteq \{1,2,\ldots,d-1\}$,
such that
$$
\sum_{i=1}^{d-1} \lambda_i \cdot \subss{\jac F}{X}{\alpha_i} \cdot \gamma \ne 0
$$
for all $\alpha_i \in \{\beta + \mu \gamma \mid \mu \in \C\}$.

\item[(2)] $F|_{\beta + \C \gamma}$ is linearly rectifiable
(in particular injective), i.e.\@ there exists a vector $v \in \C^m$ such that
\begin{equation} \label{lineaddv}
\sum_{j=1}^m v_j \cdot\dder{}{T} \big(F_j(\beta+T \gamma)\big) = 1
\end{equation}

\item[(3)]
For all $s \in \N$,
$$
\sum_{i=1}^s \lambda_i\cdot\subss{\jac F}{X}{\alpha_i} \cdot \gamma \ne 0
$$
for all $\lambda_i \in \C$ such that $\lambda_1 + \lambda_2 + \cdots +
\lambda_s \ne 0$, and all $\alpha_i \in \{\beta + \mu \gamma \mid \mu \in \C\}$.

\end{enumerate}
\end{thm}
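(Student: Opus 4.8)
The plan is to prove the cycle of implications $(3) \Rightarrow (1) \Rightarrow (2) \Rightarrow (3)$, and the only genuinely new ingredient is $(1) \Rightarrow (2)$; the rest is bookkeeping. The implication $(3) \Rightarrow (1)$ is immediate: take $s = d-1$ and $\lambda_i = 1$ for all $i$, so that every nonempty partial sum equals the cardinality of the index set, which is nonzero.

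For $(1) \Rightarrow (2)$, first note that by Proposition \ref{lineinj}, applied with the same $\lambda_i$'s and with $\gamma$ replaced by $\mu\gamma$ for arbitrary $\mu \in \C$ (rescaling the $\lambda_i$'s does not affect the hypotheses), $F(\beta + a\gamma) \ne F(\beta + b\gamma)$ whenever $a \ne b$; so $F|_{\beta + \C\gamma}$ is injective. Now introduce the one-variable polynomial map $g(T) := F(\beta + T\gamma) \in \C[T]^m$, which has degree $\le d$ in $T$, and consider $g'(T) = \subss{\jac F}{X}{\beta + T\gamma}\cdot\gamma \in \C[T]^m$. The claim is that the components $g_1'(T), \ldots, g_m'(T)$ together have no common zero in $\C$, i.e.\ the ideal they generate in $\C[T]$ is all of $\C[T]$. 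Indeed, if $g'(\mu_0) = 0$ for some $\mu_0$, then taking $\alpha_i := \beta + \mu_0\gamma$ for every $i$ would give $\sum_{i=1}^{d-1}\lambda_i \cdot \subss{\jac F}{X}{\alpha_i}\cdot\gamma = (\sum_i \lambda_i)\, g'(\mu_0) = 0$, contradicting (1). Hence $\gcd(g_1', \ldots, g_m') = 1$ in $\C[T]$, so there exist polynomials $w_1, \ldots, w_m \in \C[T]$ with $\sum_j w_j(T) g_j'(T) = 1$. Writing each $w_j(T)$ out and using $T \mapsto D := \sum_i \gamma_i \partial/\partial X_i$ together with identity (\ref{taylor}) — more precisely, the relation $\dder{}{T}(F_j(\beta + T\gamma)) = (D F_j)(\beta + T\gamma)$ established just before Proposition \ref{lineinj} — one converts $\sum_j w_j(T) g_j'(T) = 1$ into a statement about directional derivatives along the line; but in fact the conclusion $\sum_j w_j(T)\,\dder{}{T}(F_j(\beta + T\gamma)) = 1$ is already exactly (\ref{lineaddv}) with $v_j$ taken to be the polynomial $w_j(T)$ — so I should be slightly careful about whether (2) asks for constants $v_j$ or polynomials; reading the statement, $v \in \C^m$ is a constant vector, so I actually need the stronger fact that $g'(T)$ is a \emph{constant} nonzero vector after the substitution, which follows because $F|_{\beta + \C\gamma}$ injective forces $g$ to be affine in $T$ (a classical fact: an injective polynomial map $\C \to \C^m$ in one variable is, up to reparametrization, degree one — though here injectivity of the polynomial \emph{function} does not immediately give degree one, so the honest route is to use Corollary \ref{allinj}-style rank reasoning, or to note $g$ is injective hence $g'$ is nowhere zero hence, being a polynomial vector with no zero, its components are coprime, and then $g$ is a polynomial parametrization of a line, which by the one-variable case of the Abhyankar–Moh / Cynk–Rusek circle of ideas is linearly equivalent to $T \mapsto (T,0,\ldots,0)$). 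I would phrase this as: $g'$ nowhere zero $\Rightarrow$ $\C[g_1,\ldots,g_m] \supseteq$ enough to conclude $g$ embeds $\C$ linearly, giving the constant $v$.

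For $(2) \Rightarrow (3)$, suppose (\ref{lineaddv}) holds with constant vector $v$, and fix $s \in \N$, scalars $\lambda_i$ with $\sum_i \lambda_i \ne 0$, and points $\alpha_i = \beta + \mu_i\gamma$. After the linear rectification, $F_j(\beta + T\gamma)$ becomes affine of the form $T\cdot(\text{something}) + \text{const}$ in the relevant combination; concretely, dotting the hypothesized relation with $v$ shows $\sum_j v_j\,g_j'(T) = 1$ identically in $T$, so $\sum_j v_j\,(\subss{\jac F}{X}{\alpha_i}\cdot\gamma)_j = 1$ for \emph{every} $i$. Therefore $\sum_j v_j \cdot \big(\sum_{i=1}^s \lambda_i\,\subss{\jac F}{X}{\alpha_i}\cdot\gamma\big)_j = \sum_{i=1}^s \lambda_i \cdot 1 = \sum_{i=1}^s \lambda_i \ne 0$, which forces $\sum_{i=1}^s \lambda_i\,\subss{\jac F}{X}{\alpha_i}\cdot\gamma \ne 0$. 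This closes the cycle.

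The main obstacle I anticipate is the precise form of $(1) \Rightarrow (2)$: turning "$g'(T)$ has no zero in $\C$" into the existence of a \emph{constant} vector $v$ with $\langle v, g'(T)\rangle \equiv 1$. This is where one must invoke that an injective polynomial curve $\C \to \C^m$ is, after an affine change of coordinates on $\C^m$, the standard embedding $T\mapsto(T,0,\dots,0)$ — the appropriate reference being the Cynk–Rusek theorem cited in the proof of Corollary \ref{allinj}, or the elementary one-variable Abhyankar–Moh statement. Once that normalization is in hand, everything else is a direct substitution using (\ref{taylor}) and the chain-rule identity for $D$, and the equivalences (3)$\Rightarrow$(1) and (2)$\Rightarrow$(3) are essentially formal.
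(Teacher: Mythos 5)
Your implications (3) $\Rightarrow$ (1) and (2) $\Rightarrow$ (3) are fine and coincide with the paper's argument. The gap is in (1) $\Rightarrow$ (2), and it is exactly at the point you yourself flag. Setting $g(T) := F(\beta+T\gamma)$, the only consequence of (1) that you actually extract is that the components $g_1',\dots,g_m'$ have no common zero (you use hypothesis (1) solely in the degenerate case where all the $\alpha_i$ coincide). Coprimality in $\C[T]$ gives B\'ezout coefficients $w_j(T)$ with $\sum_j w_j g_j' = 1$, but (2) demands a \emph{constant} vector $v$, i.e.\@ that $1$ lie in the $\C$-linear span of $g_1',\dots,g_m'$, which is a strictly stronger statement. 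Your proposed rescue --- injectivity of $g$ plus nowhere-vanishing $g'$ implies linear rectifiability via Abhyankar--Moh or Cynk--Rusek --- is false: those results rectify an embedded line by a polynomial automorphism of the target, not a linear one. The paper's own remark following the theorem gives a counterexample: $F = (X_1 + (X_2+X_1^2)^2,\, X_2+X_1^2)$ is invertible, so its restriction to every line is injective with nowhere-vanishing directional derivative, yet only lines parallel to the $X_2$-axis are linearly rectifiable. (Concretely, $g(T) = (T^2, T^3+T)$ is injective with $g'$ nowhere zero, but no constant $v$ satisfies $\langle v, g'\rangle = 1$.) So condition (1) must be used with genuinely distinct collinear points $\alpha_i$, not just repeated ones.

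The paper's proof of (1) $\Rightarrow$ (2) is a contrapositive argument you would need to reproduce: if $1$ does not lie in the span $U$ of the $g_j'$, choose a basis $u_1 < \dots < u_s$ of $U$ of distinct positive degrees $< d$, build a power series $P$ with constant term $\lambda_1+\dots+\lambda_{d-1}$ such that the differential operator $P(\dder{}{T})$ annihilates each $u_j$ at $T=0$, and then invoke Lemma \ref{addexp1} to write $P \equiv \sum_{i=1}^{d-1}\lambda_i\exp(r_iT) \pmod{T^d}$; the points $\alpha_i = \beta + r_i\gamma$ then witness $\sum_i \lambda_i\,\subss{\jac F}{X}{\alpha_i}\cdot\gamma = 0$, contradicting (1). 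This is the missing idea --- the same exponential-interpolation lemma that drives Proposition \ref{lineinj} --- and without it your argument does not close.
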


\begin{proof}
Since (3) $\Rightarrow$ (1) is trivial, only two implications remain.
\begin{description}

\item[(2) \imp (3)] Assume that (2) is satisfied. Take $s \in \N$, $\lambda_1, \lambda_2,
\ldots, \lambda_s \in \C$ such that $\lambda_1 + \lambda_2 + \cdots + \lambda_s \ne 0$,
and $\alpha_i \in \{\beta + \mu \gamma \mid \mu \in \C\}$.
Each $\alpha_i$ is of the form $\alpha_i = \beta + r_i \gamma$ for some $r_i \in \C$.
By the chain rule,
\begin{align*}
v\tp \cdot \bigg( \sum_{i=1}^s \lambda_i\cdot
    \subss{\jac F}{X}{\alpha_i} \cdot \gamma \bigg)
&=  \sum_{i=1}^s \lambda_i\cdot\bigg(\sum_{j=1}^m v_j \cdot
    \subss{\jac F_j}{X}{\beta+r_i\gamma} \cdot \gamma \bigg) \\
&=  \sum_{i=1}^s \lambda_i\cdot
    \bsubss{\bigg}{\sum_{j=1}^m v_j \dder{}{T} \big(F_j (\beta + T \gamma)\big)}{T}{r_i} \\
&=  \sum_{i=1}^s \lambda_i \cdot \subs{1}{T}{r_i} = \sum_{i=1}^s \lambda_i \ne 0
\end{align*}
which gives (3).

\item [(1) \imp (2)]
Assume that (2) does not hold. We will derive a contradiction by showing
that (1) does not hold either.

Since $\deg_T \dder{}{T} F_j(\beta + T \gamma) \le d-1$ for all $j$,
the $\C$-space $U$ that is generated by
$$
\dder{}{T} F_1(\beta + T \gamma), \dder{}{T} F_2(\beta + T \gamma),
\ldots, \dder{}{T} F_m(\beta + T \gamma)
$$
has dimension $s \le d-1$, for $1 \notin U$.
Take a basis of $U$ of monic $u_1, u_2, \allowbreak \ldots,
\allowbreak u_s \in \C[T]$ such that $0 < \deg u_1 < \deg u_2
< \cdots < \deg u_s < d$. Write $u_{ji}$ for
the coefficient of $T^i$ of $u_j$.

Next, define $p_i$ for $i = 0,1,\ldots,d-1$
as follows.
$$
p_i := \left\{ \begin{array}{ll}
       -\sum_{k=0}^{i-1} p_k u_{jk} & \mbox{if $u_j$ has degree $i$.} \\
       \lambda_1 + \lambda_2 + \cdots + \lambda_{d-1} & \mbox{if no $u_j$ has degree $i$,}
\end{array} \right.
$$
Set $P := \sum_{k=1}^{d-1} \frac{p_k}{k!} T^k$ and choose $r_i$ as in Lemma \ref{addexp1}
for all $i$. Looking at the term expansion of $u_j$, we see that
$$
P\Big(\dder{}{T}\Big)u_j =
\sum_{k=0}^{\infty} \frac{p_k}{k!} \cdot \sum_{l=0}^{\infty} \frac{(k+l)!}{l!} u_{jk} T^l
$$
whence for $i = \deg u_j$
\begin{align*}
\bsubss{\bigg}{P\Big(\dder{}{T}\Big) u_j}{T}{0} &=
\sum_{k=0}^{\infty} p_k u_{jk} = p_{i} + \sum_{k=0}^{i-1} p_k u_{jk} = 0
\intertext{and similarly for each $i$}
\bsubss{\bigg}{\exp\Big(r_i \dder{}{T}\Big)u_j}{T}{0} &=
\sum_{k=0}^{\infty} r_i^k u_{jk} = u_j(r_i) = \bsubs{\big}{u_j}{T}{r_i}
\end{align*}
follow for all $j$.

By Lemma \ref{addexp1}, $P - \sum_{i=1}^{d-1} \lambda_i \exp(r_i T)$
is divisible by $T^d$. Since $\deg u_j < d$ for all $j$,
$$
0 = \bsubss{\bigg}{P\Big(\dder{}{T}\Big)u_j}{T}{0}
= \sum_{i=1}^{d-1} \lambda_i \cdot \bsubss{\bigg}{\exp\Big(r_i \dder{}{T}\Big)u_j}{T}{0}
= \sum_{i=1}^{d-1} \lambda_i \bsubs{\big}{u_j}{T}{r_i}
$$
Since $\dder{}{T} F_j(\beta + T \gamma)$ is a $\C$-linear combination of
$u_1, u_2, \ldots, u_s$ for all $j$, we have
\begin{align*}
0 &= \sum_{i=1}^{d-1} \lambda_i \cdot
     \bsubss{\Big}{\jac_T \big(F(\beta + T \gamma)\big)}{T}{r_i} \\
  &= \sum_{i=1}^{d-1} \lambda_i \cdot
     \bsubss{\big}{\subss{\jac F}{X}{\beta + T \gamma} \cdot \gamma}{T}{r_i} \\
  &= \sum_{i=1}^{d-1} \lambda_i \cdot \bsubss{\big}{\jac F}{X}{\beta + r_i \gamma}
     \cdot \gamma
\end{align*}
which is a contradiction. \qedhere

\end{description}
\end{proof}

\begin{rem}
For the map $F = (X_1 + (X_2 + X_1^2)^2, X_2 + X_1^2)$, only images of
lines parallel to the $X_2$-axis are linearly rectifiable. But all images of
lines are linearly rectifiable when
$F = (X_1 + (X_2 + X_1^2)^2 - (X_3 + X_1^2)^2, X_2 + X_1^2, X_3 + X_1^2)$
or any other invertible cubic map over $\C$. This follows from the proposition below.
\end{rem}

\begin{prop} \label{cubicrectif}
Let $F: \C^n \rightarrow \C^m$ be a polynomial map of degree $\le 3$, and
$\beta, \gamma \in \C^n$ such that $\gamma \ne 0$. If $F|_{\beta + \C \gamma}$
is injective and $(\jac F)|_{X = \alpha} \cdot \gamma \ne 0$ for all
$\alpha \in \{\beta + \mu \gamma \mid \mu \in \C\}$, then $F|_{\beta + \C \gamma}$
is linearly rectifiable, i.e.\@ there exists a $v \in \C^m$ such that (\ref{lineaddv}) holds.
\end{prop}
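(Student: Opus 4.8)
The plan is to translate the statement into a claim about the single vector-valued polynomial $g(T) := F(\beta + T\gamma) \in \C[T]^m$. Since $\deg F \le 3$ we may write $g = g_0 + g_1 T + g_2 T^2 + g_3 T^3$ with $g_k \in \C^m$, and $\subss{\jac F}{X}{\beta + r\gamma}\cdot\gamma = g'(r)$, so the hypothesis says $g'(r) \ne 0$ for every $r \in \C$; in particular $g_1 = g'(0) \ne 0$. Expanding (\ref{lineaddv}), a vector $v \in \C^m$ realises it if and only if $v\tp g_1 = 1$ and $v\tp g_2 = v\tp g_3 = 0$, and such a $v$ exists if and only if $g_1 \notin W := \operatorname{span}_\C\{g_2, g_3\}$ (choose $v$ in the annihilator of $W$, using that a subspace of $\C^m$ equals the annihilator of its annihilator). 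Alternatively one can obtain this same reduction from Theorem \ref{lineadd} applied with $d = 3$ and $\lambda_1 = \lambda_2 = 1$: since $g'(r) + g'(s) = 2g_1 + 2g_2(r+s) + 3g_3(r^2 + s^2)$ and $(r+s,\,r^2+s^2)$ runs over all of $\C^2$, condition (1) there amounts exactly to $g_1 \notin W$. So it suffices to prove: if $g$ is injective and $g'$ has no zero, then $g_1 \notin W$.

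Suppose, for contradiction, $g_1 \in W$. Then $g - g_0$ takes all its values in $W$, and $\dim W \in \{1, 2\}$ since $g_1 \ne 0$. If $\dim W = 1$, write $W = \C w$ and $g_i = c_i w$ for $i = 1,2,3$; then $g'(T) = (c_1 + 2c_2 T + 3c_3 T^2)\,w$ with $(c_2, c_3) \ne (0,0)$, so the scalar factor is a nonconstant complex polynomial, hence has a root $r_0 \in \C$, whence $g'(r_0) = 0$ — contradicting the hypothesis on $g'$.

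If $\dim W = 2$, take $\{g_2, g_3\}$ as a basis of $W$ and pass to the associated linear coordinates on $W$; this linear change of coordinates preserves both injectivity of $g$ and the zero set of $g'$. In these coordinates $g - g_0$ becomes $h(T) = (aT + T^2,\ bT + T^3)$ for suitable $a, b \in \C$. Now $h'(T) = (a + 2T,\ b + 3T^2)$ has a zero if and only if $b = -\tfrac{3}{4}a^2$, while $h(s) = h(t)$ with $s \ne t$ forces $s + t = -a$ from the first coordinate and, using $s^2 + st + t^2 = (s+t)^2 - st$, also $st = a^2 + b$ from the second; thus $s$ and $t$ are the roots of $Z^2 + aZ + (a^2 + b)$, which are distinct exactly when the discriminant $-3a^2 - 4b$ is nonzero, that is, when $b \ne -\tfrac{3}{4}a^2$. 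Hence $h$ — equivalently $F|_{\beta + \C\gamma}$ — is non-injective when $b \ne -\tfrac{3}{4}a^2$ and has a vanishing directional derivative when $b = -\tfrac{3}{4}a^2$, a contradiction in either case.

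The one genuinely content-bearing point, and the nearest thing to an obstacle, is this last dichotomy: a cubic plane curve $h$ of the above shape is injective precisely when its velocity vanishes somewhere. The remaining steps are bookkeeping, the only one needing slight care being the precise form of the criterion ``such a $v$ exists iff $g_1 \notin W$'' in the reduction step; I do not anticipate a real difficulty.
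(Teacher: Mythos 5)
Your proof is correct and is essentially the paper's own argument in different coordinates: the paper also takes the contrapositive, confines the directional derivatives to a two-dimensional space spanned by monic $u_1,u_2$ of degrees $1,2$, and produces a collision by integrating over the symmetric interval from $-\sqrt{-3u_{20}}$ to $+\sqrt{-3u_{20}}$ unless $u_{20}=0$, in which case the derivative vanishes at $\beta$ --- exactly your dichotomy $b=-\tfrac34 a^2$ for the normal form $(aT+T^2,\,bT+T^3)$ (indeed $u_{20}=\tfrac{a^2}{4}+\tfrac{b}{3}$ after the paper's translation). The only cosmetic differences are that you treat the rank-one case separately, which the paper absorbs by letting $u_1,u_2$ span a space merely containing the derivatives, and that you make the reduction to the condition $g_1\notin\operatorname{span}\{g_2,g_3\}$ explicit rather than citing non-rectifiability directly.
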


\begin{proof}
Assume $F|_{\beta + \C \gamma}$ is not linearly rectifiable.
Then there exist monic $u_1, u_2 \in \C[T]$ such that $\deg u_i = i$ and
for all $j$, $\dder{}{T} F_j(\beta + T \gamma)$ is linearly dependent over $\C$ of
$u_1$ and $u_2$. If the constant term $u_{10}$ of $u_1$ is nonzero, then $u_{10}$
will become zero after replacing $\beta$ by $\beta - u_{10}\gamma$ and adapting
$u_1$ and $u_2$ accordingly.
So assume $u_{10} = 0$ and let $u_{20}$ be the constant term of
$u_2$. By taking the integral of
$u_1$ and $u_2$ from $T = - \sqrt{-3u_{20}}$ to $T = + \sqrt{-3u_{20}}$, we see that
 $F(\beta - \sqrt{-3u_{20}} \gamma) = F(\beta + \sqrt{-3u_{20}} \gamma)$,
thus either $F|_{\beta + \C \gamma}$ is not injective
or $u_{20} = 0$. If $u_{20} = 0$, then $(\jac F)|_{X = \beta} \cdot \gamma = 0$
because both $u_1$ and $u_2$ are divisible by $T$.
This completes the proof of Proposition \ref{cubicrectif}.
\end{proof}

\begin{cor}
Assume $F: \C^n \rightarrow \C^n$ is a polynomial map of degree $\le 3$
which safisfies the Keller condition $\det \jac F \in \C^{*}$.
Then $F$ is invertible, if and only if $F|_{L}$ is linearly rectifiable
for every affine line $L \subseteq \C^n$, if and only if
$\big(\subss{\jac F}{X}{\alpha}+\subss{\jac  F}{X}{\beta}\big)(\alpha-\beta)\neq 0$
for all $\alpha,\beta\in  \C^n$ with $\alpha \neq \beta$.
\end{cor}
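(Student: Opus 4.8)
The plan is to prove the corollary by stringing together three implications that cycle through the three stated conditions, invoking Theorem~\ref{lineadd} and Proposition~\ref{cubicrectif} as the substantive inputs and the Cynk--Rusek Theorem (as used in Corollary~\ref{allinj}) to convert injectivity into invertibility. Write (a) for ``$F$ is invertible'', (b) for ``$F|_L$ is linearly rectifiable for every affine line $L$'', and (c) for the directional non-vanishing condition $\big(\subss{\jac F}{X}{\alpha}+\subss{\jac F}{X}{\beta}\big)(\alpha-\beta)\ne 0$ for all $\alpha\ne\beta$. I would prove (a) $\Rightarrow$ (b) $\Rightarrow$ (c) $\Rightarrow$ (a).

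For (a) $\Rightarrow$ (b): if $F$ is invertible then in particular $F|_L$ is injective for every affine line $L$, and the Keller condition gives $\subss{\jac F}{X}{\alpha}\cdot\gamma\ne 0$ for every $\alpha$ and every direction $\gamma\ne 0$ (since $\subss{\jac F}{X}{\alpha}$ is an invertible matrix). Since $\deg F\le 3$, Proposition~\ref{cubicrectif} applies verbatim to each line $L=\beta+\C\gamma$ and yields linear rectifiability. For (b) $\Rightarrow$ (c): fix $\alpha\ne\beta$, set $\gamma=\alpha-\beta$ and $L=\beta+\C\gamma$; by hypothesis $F|_L$ is linearly rectifiable, so by Theorem~\ref{lineadd} (equivalence of (2) with (1), taking $d=3$ so $d-1=2$) there exist $\lambda_1,\lambda_2\in\C$ with $\lambda_1,\lambda_2,\lambda_1+\lambda_2$ all nonzero such that $\lambda_1\subss{\jac F}{X}{\alpha_1}\gamma+\lambda_2\subss{\jac F}{X}{\alpha_2}\gamma\ne 0$ for all $\alpha_1,\alpha_2\in L$. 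Taking $\alpha_1=\alpha$, $\alpha_2=\beta$ gives $\lambda_1\subss{\jac F}{X}{\alpha}\gamma+\lambda_2\subss{\jac F}{X}{\beta}\gamma\ne 0$; here I should note that one cannot immediately read off $(\subss{\jac F}{X}{\alpha}+\subss{\jac F}{X}{\beta})\gamma\ne 0$ because the coefficients $\lambda_i$ need not be equal, so a small extra argument is needed, e.g.\ observe that statement (2) of Theorem~\ref{lineadd} is symmetric in how one parametrizes the line, or better, use the $s$-fold version (3) with $s=2$ and $\lambda_1=\lambda_2=1$, which is also equivalent to (2) and directly gives $(\subss{\jac F}{X}{\alpha}+\subss{\jac F}{X}{\beta})\gamma\ne 0$.

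For (c) $\Rightarrow$ (a): condition (c) with $\gamma=\alpha-\beta$ says exactly that $\sum_{i=1}^{2}\subss{\jac F}{X}{\alpha_i}\cdot\gamma\ne 0$ whenever $\alpha_1,\alpha_2$ are two (collinear) points on the line $\beta+\C\gamma$ with $\alpha_1-\alpha_2$ a nonzero multiple of $\gamma$ — and since (c) is assumed for \emph{all} pairs, this holds for every affine line and every pair of distinct points on it, hence for all $\alpha_1,\alpha_2$ in that line (the case $\alpha_1=\alpha_2$ being covered because the Keller condition forces $2\subss{\jac F}{X}{\alpha_1}\gamma\ne 0$). Thus hypothesis (1) of Theorem~\ref{lineadd} holds with $d=3$, $\lambda_1=\lambda_2=1$, for every line, so by Proposition~\ref{lineinj} (or directly by Corollary~\ref{allinj}'s proof) $F$ is injective, and then the Cynk--Rusek Theorem — valid here since $n=m$ — gives that $F$ is an invertible polynomial map. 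I expect the only real subtlety to be the bookkeeping in (b) $\Rightarrow$ (c): making sure to extract the \emph{equal-coefficient} sum condition rather than a weighted one, which is cleanly handled by using part (3) of Theorem~\ref{lineadd} instead of part (1); everything else is a direct application of the quoted results with $d=3$, together with the routine observation that the Keller condition rules out the degenerate diagonal case $\alpha=\beta$.
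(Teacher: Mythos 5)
Your proposal is correct and follows essentially the same route as the paper: the paper likewise derives the first equivalence from Proposition~\ref{cubicrectif} (with Cynk--Rusek supplying injective $\Rightarrow$ invertible) and the second from Theorem~\ref{lineadd}, and your care in using part~(3) of Theorem~\ref{lineadd} to get the equal-coefficient sum, plus the Keller condition to cover the diagonal case $\alpha_1=\alpha_2$, correctly fills in details the paper leaves implicit.
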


\begin{proof}
By Proposition \ref{cubicrectif}, $F$ is invertible, if and only if $F|_{L}$
is linearly rectifiable for every affine line $L \subseteq \C^n$.
By Proposition \ref{lineadd}, the latter is equivalent to
$\big(\subss{\jac F}{X}{\alpha}+\subss{\jac  F}{X}{\beta}\big)(\alpha-\beta)\neq 0$
for all $\alpha,\beta\in  \C^n$ with $\alpha \neq \beta$, as desired.
\end{proof}

\begin{rem}
Notice that in the proof of Lemma \ref{addexp1}, we solve
$d-1$ equations in $d-1$ variables to obtain $r_1, r_2, \ldots, r_{d-1}$.
In case $\lambda_1 = \lambda_2 = \cdots = \lambda_{d-1}$, it suffices to
solve only one equation in only one variable to obtain $r_1, r_2, \ldots,
r_{d-1}$.

\begin{lemalt} \label{addexp2}
Let $P \in \C[[T]]$ with constant term $d-1$. Then there are
$r_1, r_2, \ldots, \allowbreak r_{d-1} \in \C$, which are roots of
a polynomial whose coefficients are polynomials in those of $P$, such that
$$
P - \sum_{i=1}^{d-1} \exp(r_i T)
$$
is divisible by $T^d$, where $\exp(T) = \sum_{j=0}^{\infty}
\frac{1}{j!} T^j$.
\end{lemalt}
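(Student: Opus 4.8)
The plan is to carry out the same reduction as at the start of the proof of Lemma~\ref{addexp1}, but then, instead of invoking Krull's Height Theorem, to use the classical dictionary between power sums and elementary symmetric functions, which in characteristic zero is a polynomial isomorphism and hence also yields the asserted dependence of the $r_i$ on the coefficients of $P$. Writing $P = \sum_{j=0}^{\infty} \frac{p_j}{j!} T^j$, so that $p_0 = d-1$, the conclusion is equivalent to finding $(r_1, r_2, \ldots, r_{d-1}) \in \C^{d-1}$ with $\sum_{i=1}^{d-1} r_i^j = p_j$ for $j = 0, 1, \ldots, d-1$; the equation for $j=0$ holds automatically, so the task is to exhibit $d-1$ complex numbers with prescribed power sums $p_1, p_2, \ldots, p_{d-1}$.

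First I would appeal to Newton's identities: over a field of characteristic zero, the elementary symmetric polynomials $e_1, e_2, \ldots, e_{d-1}$ in the indeterminates $r_1, \ldots, r_{d-1}$ are determined recursively by
$$
k\,e_k = \sum_{i=1}^{k} (-1)^{i-1} p_i\, e_{k-i} \qquad (1 \le k \le d-1,\ e_0 = 1),
$$
so each $e_k$ is a polynomial with rational coefficients in $p_1, \ldots, p_k$. Substituting the given numbers $p_1, \ldots, p_{d-1}$ into these polynomials produces $c_1, \ldots, c_{d-1} \in \C$; let $r_1, r_2, \ldots, r_{d-1}$ be the roots, counted with multiplicity, of the monic polynomial
$$
Q(T) := T^{d-1} - c_1 T^{d-2} + c_2 T^{d-3} - \cdots + (-1)^{d-1} c_{d-1},
$$
which exist since $\C$ is algebraically closed. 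By Vieta's formulas the elementary symmetric functions of $r_1, \ldots, r_{d-1}$ are exactly $c_1, \ldots, c_{d-1}$, and the same recursion, read now as determining the power sums from $c_1, \ldots, c_{d-1}$, forces $\sum_{i=1}^{d-1} r_i^j = p_j$ for $1 \le j \le d-1$ by induction on $j$. Hence the coefficient of $T^j$ in $P - \sum_{i=1}^{d-1} \exp(r_i T)$, which equals $\frac{1}{j!}\bigl(p_j - \sum_{i=1}^{d-1} r_i^j\bigr)$, vanishes for $j = 0, 1, \ldots, d-1$, i.e.\ $P - \sum_{i=1}^{d-1} \exp(r_i T)$ is divisible by $T^d$.

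It remains to note the dependence on $P$: since $p_j$ is $j!$ times a coefficient of $P$, each $c_k$ is a polynomial with rational coefficients in the coefficients of $P$, so $r_1, \ldots, r_{d-1}$ are roots of the polynomial $Q(T)$, whose coefficients are polynomials in those of $P$, as required. I do not expect a genuine obstacle here. The only point needing care is that Newton's identities may be inverted, which is purely formal once $1/k \in \C$; and, in contrast with Lemma~\ref{addexp1}, the awkward degenerate case (there, $r_d = 0$) simply cannot occur, because the map ``power sums $\mapsto$ elementary symmetric functions'' is a polynomial automorphism of $\C^{d-1}$. This is also what accounts for the preceding remark that here it ``suffices to solve one equation in one variable'': the $r_i$ are precisely the solutions of the single univariate polynomial equation $Q(T) = 0$.
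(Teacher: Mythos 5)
Your proof is correct and follows essentially the same route as the paper: both reduce to prescribing the power sums $p_1,\ldots,p_{d-1}$, pass via Newton's identities to the elementary symmetric functions, take the $r_i$ to be the roots of the resulting monic polynomial over $\C$, and recover $\sum_i r_i^j = p_j$ from the invertibility of the power-sum/elementary-symmetric dictionary in characteristic zero. Your version merely makes explicit (via the recursion and Vieta) what the paper compresses into the phrase ``the injectivity of $f$ gives the desired result.''
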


\begin{proof}
Write
$$
P = \sum_{j=0}^{\infty} \frac{p_j}{j!} T^j
$$
Then we must find a solution $(Y_1, Y_2, \ldots, Y_{d-1}) =
(r_1, r_2, \ldots, r_{d-1})$ of
$$
\sum_{i=1}^{d-1} Y_i^j = p_j \quad (j = 0,1,\ldots,d-1)
$$
By Newton's identities for symmetric polynomials, there
exist a polynomial $f \in \C[T][X_1,X_2,\ldots,X_{d-1}]$ which is
injective as a function of $\C^{d-1}$ to $\C[T]$, such that
$$
f \left(\sum_{i=1}^{d-1} X_i, \sum_{i=1}^{d-1} X_i^2, \ldots,
  \sum_{i=1}^{d-1} X_i^{d-1}\right) = \prod_{i=1}^{d-1} (T + X_i)
$$
Notice that $g := f(p_1, \ldots, p_{d-1})$ is a monic polynomial
of degree $d-1$ in $T$. Hence we can decompose $g$ as
$$
g = \prod_{i=1}^{d-1} (T + r_i) = f\left(\sum_{i=1}^{d-1} r_i,
  \sum_{i=1}^{d-1} r_i^2, \ldots, \sum_{i=1}^{d-1} r_i^{d-1}\right)
$$
and the injectivity of $f$ gives the desired result.
\end{proof}
\end{rem}

\section{Additive properties of the Jacobian determinant}

\begin{prop} \label{quadr}
Let $F: \C^n \rightarrow \C^n$ be a quadratic polynomial map such that
$\det \jac F \in \C$. Then for all $s \in \N$,
$$
\det\bigg(\sum_{i=1}^s b_i \cdot \subss{\jac F}{X}{\alpha_i}\bigg)
= \det \bigg(\sum_{i=1}^s b_i \cdot \jac F \bigg)
= \bigg(\sum_{i=1}^s b_i\bigg)^n \cdot \det \jac F
$$
for all $\alpha_1, \alpha_2, \ldots, \alpha_s \in \C^n$ and all
$b_1, b_2, \ldots, b_s \in \C$.
\end{prop}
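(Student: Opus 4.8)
The plan is to reduce the whole statement to a single evaluation of $\jac F$, exploiting that $F$ is quadratic. Decompose $F = F^{(0)} + F^{(1)} + F^{(2)}$ into its homogeneous components. Then $\jac F = A + L(X)$, where $A := \jac F^{(1)} \in \Mat_n(\C)$ is a constant matrix and the entries of $L(X) := \jac F^{(2)}$ are linear forms; equivalently, $\alpha \mapsto L(\alpha) := \subss{\jac F^{(2)}}{X}{\alpha}$ is a $\C$-linear map $\C^n \to \Mat_n(\C)$. Hence $\subss{\jac F}{X}{\alpha} = A + L(\alpha)$ for all $\alpha \in \C^n$. The middle equality of the proposition is trivial, since $\sum_{i=1}^s b_i \cdot \jac F = \big(\sum_{i=1}^s b_i\big) \jac F$ and the determinant is homogeneous of degree $n$ in this scalar; so it remains to compute the left-hand side.

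First I would use the linearity of $L$. Writing $b := \sum_{i=1}^s b_i$ and $\gamma := \sum_{i=1}^s b_i \alpha_i$, we get
\begin{equation*}
\sum_{i=1}^s b_i \cdot \subss{\jac F}{X}{\alpha_i}
  = \Big(\sum_{i=1}^s b_i\Big) A + L\Big(\sum_{i=1}^s b_i \alpha_i\Big) = b\,A + L(\gamma).
\end{equation*}
If $b \ne 0$, then $b\,A + L(\gamma) = b\big(A + L(b^{-1}\gamma)\big) = b \cdot \subss{\jac F}{X}{b^{-1}\gamma}$, so taking determinants and using that $\det \jac F \in \C$ is a constant yields $\det\big(\sum_{i=1}^s b_i \subss{\jac F}{X}{\alpha_i}\big) = b^n \det \subss{\jac F}{X}{b^{-1}\gamma} = b^n \det \jac F$, as wanted.

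The only point that needs an argument is the degenerate case $b = 0$, where the identity reads $\det L(\gamma) = 0$; I expect this to be the main (minor) obstacle. Here I would observe that $\det\big(A + t\,L(X)\big)$ is nothing but $\det \jac F$ evaluated at $tX$, hence equals the constant $\det \jac F$ for every $t \in \C$ and every $X \in \C^n$. Regarding $\det\big(A + t\,L(X)\big)$ as a polynomial in $t$ of degree at most $n$ — which one sees by expanding the determinant and using that each matrix entry is affine in $t$ — its coefficient of $t^n$ is $\det L(X)$, and this coefficient must vanish identically since the polynomial is constant in $t$. Thus $\det L(\gamma) = 0 = b^n \det \jac F$ for $n \ge 1$ (the case $n = 0$ being trivial), which finishes the proof. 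The verification is routine; one only has to keep the $b = 0$ case separate and make the ``$t^n$-coefficient equals $\det L(X)$'' step precise via multilinearity of the determinant in the columns. Note that no earlier result of the paper is needed.
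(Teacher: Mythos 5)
Your proof is correct and takes essentially the same route as the paper: both exploit that the entries of $\jac F$ are affine in $X$ to collapse $\sum_{i} b_i \cdot (\jac F)|_{X=\alpha_i}$ into $\sigma \cdot (\jac F)|_{X=\sigma^{-1}\sum_i b_i\alpha_i}$ when $\sigma := \sum_i b_i \ne 0$, and then use that $\det\jac F$ is constant. The only divergence is the degenerate case $\sigma = 0$, which the paper handles by continuity while you argue directly via the vanishing of the $t^n$-coefficient of $\det(A + tL(X))$; both are valid.
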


\begin{proof}
Since the entries of $\jac F$ are affinely linear, we have
$$
\sum_{i=1}^s b_i \cdot \subss{\jac F}{X}{\alpha_i}
= \sigma \cdot \bbsubss{\bigg}{\Big.}{\jac F}{\textstyle X}{\sigma^{-1}
  \sum_{i=1}^s b_i \alpha_i}
$$
for all $\alpha_1, \alpha_2, \ldots, \alpha_s \in \C^n$ and all
$b_1, b_2, \ldots, b_s \in \C$, in case $\sigma := \sum_{i=1}^s b_i \ne 0$.
Taking determinants on both sides, it follows from $\det \jac F \in \C$ that
$$
\det\bigg(\sum_{i=1}^s b_i \cdot \subss{\jac F}{X}{\alpha_i}\bigg)
= \det (\sigma \cdot \jac F) = \sigma^n \cdot \det \jac F
$$
when $\sigma \ne 0$, and by continuity also in case $\sigma = 0$, as desired.
\end{proof}

\begin{lem} \label{lm3.2}
Assume $f \in \C[X]$ has degree $\le d$. If $f$ vanishes on the set
$S := \{a \in \N^n \mid a_1 + a_2 + \cdots + a_n \le d \}$, then $f = 0$.
\end{lem}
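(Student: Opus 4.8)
The plan is to induct on the number of variables $n$, with the one-variable case as the base. When $n = 1$, the set $S$ consists of the integers $0, 1, \ldots, d$, so $f \in \C[X_1]$ is a polynomial of degree at most $d$ vanishing at $d+1$ distinct points; hence $f = 0$. This is the classical fact that a nonzero univariate polynomial of degree at most $d$ has at most $d$ roots.

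For the inductive step, suppose the statement holds for polynomials in $n-1$ variables. Write $f \in \C[X_1, \ldots, X_n]$ of degree $\le d$ as a polynomial in $X_n$ with coefficients in $\C[X_1, \ldots, X_{n-1}]$:
$$
f = \sum_{k=0}^{d} c_k(X_1, \ldots, X_{n-1}) \, X_n^k,
$$
where $\deg c_k \le d - k$. Fix any $(a_1, \ldots, a_{n-1}) \in \N^{n-1}$ with $a_1 + \cdots + a_{n-1} \le d$, and set $e := d - (a_1 + \cdots + a_{n-1}) \ge 0$. Substituting $(X_1, \ldots, X_{n-1}) = (a_1, \ldots, a_{n-1})$, we get a univariate polynomial in $X_n$ of degree at most $e$ (since the coefficient of $X_n^k$ vanishes once $k > e$ for degree reasons), and by hypothesis it vanishes at $X_n = 0, 1, \ldots, e$, which are $e+1$ distinct points. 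By the base case, this univariate polynomial is zero, so $c_k(a_1, \ldots, a_{n-1}) = 0$ for all $k$. Thus each $c_k$ vanishes on the set $\{ a \in \N^{n-1} \mid a_1 + \cdots + a_{n-1} \le d \}$, which contains $\{ a \in \N^{n-1} \mid a_1 + \cdots + a_{n-1} \le d - k \}$; since $\deg c_k \le d-k$, the induction hypothesis (applied with $d$ replaced by $d-k$) gives $c_k = 0$ for all $k$, whence $f = 0$.

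I do not anticipate a genuine obstacle here; the only point requiring a little care is bookkeeping the degree bounds so that at each stage one is evaluating a univariate polynomial of degree at most $e$ at exactly $e+1$ prescribed points, and correctly invoking the induction hypothesis with the diminished degree bound $d - k$ for the coefficient $c_k$. An alternative, essentially equivalent route would be a direct argument via finite differences: the vanishing of $f$ on $S$ forces all its discrete derivatives of total order $\le d$ at the origin to vanish, and these determine the coefficients of $f$ since $\deg f \le d$; but the inductive argument above is cleaner to write and self-contained given the tools already in the paper.
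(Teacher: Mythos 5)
There is a genuine gap in your inductive step: the parenthetical claim that the coefficient of $X_n^k$ in $f(a_1,\ldots,a_{n-1},X_n)$ vanishes for $k>e$ ``for degree reasons'' is false. That coefficient is $c_k(a_1,\ldots,a_{n-1})$, and knowing $\deg c_k\le d-k< a_1+\cdots+a_{n-1}$ says nothing about its value at the specific point $(a_1,\ldots,a_{n-1})$ --- for instance $c_k$ could be a nonzero constant, and for $f=X_n^d$ the restriction $f(a_1,\ldots,a_{n-1},X_n)$ has degree $d$ in $X_n$ regardless of $e$. So the univariate polynomial you obtain may have degree up to $d$ while you only know it vanishes at the $e+1$ points $0,1,\ldots,e$; you cannot conclude it is zero, and the ensuing claim that every $c_k$ vanishes on the whole simplex collapses. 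Indeed, a single induction on $n$ with this slicing cannot succeed: when $a_1+\cdots+a_{n-1}=d$ you learn only $f(a,0)=0$, one linear condition on the $d+1$ unknown values $c_0(a),\ldots,c_d(a)$.

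The paper gets around this with a double induction, on $n$ \emph{and} on $d$: it writes $f=(f|_{X_n=0})+X_n\cdot(g|_{X_n=X_n-1})$, kills $f|_{X_n=0}$ by induction on $n$, and notes that the difference quotient $g$ has degree $\le d-1$ and vanishes on the simplex of size $d-1$, so it dies by induction on $d$. Your one-sentence alternative via finite differences is in fact sound (every discrete derivative $\Delta^\alpha f(0)$ with $|\alpha|\le d$ is a $\Z$-linear combination of values of $f$ on $S$, and these are the coordinates of $f$ in the basis $\binom{X_1}{\alpha_1}\cdots\binom{X_n}{\alpha_n}$, $|\alpha|\le d$, of polynomials of degree $\le d$); it is essentially the paper's difference-quotient recursion in closed form, and promoting that sketch to the main argument would repair the proof.
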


\begin{proof}
Write $f = (f|_{X_n = 0}) + X_n \cdot (g|_{X_n=X_n-1})$. By induction on
$n$, $(f|_{X_n = 0}) = 0$. Furthermore, if $a \in S$ and
$a_n \ge 1$, then
$$
g(a_1,a_2,\ldots,a_{n-1},a_n-1) = (g|_{X_n=X_n-1})(a) =
\frac{f(a) - (f|_{X_n=0})(a)}{a_n} = 0
$$
thus by induction on $d$, $g = 0$. Hence $f=0$ as well.
\end{proof}

\begin{cor} \label{cor3.2}
Let $f \in \C[X]$ be a polynomial of degree $\le d$.
If $f(a) = 0$ for all $a \in \N^n$ such that
$\sum_{i=1}^n a_i = d$, then $\sum_{i=1}^n x_i - d \mid f$. If additionally $f$
is homogeneous, then $f = 0$.
\end{cor}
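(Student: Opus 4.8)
The plan is to reduce Corollary \ref{cor3.2} to Lemma \ref{lm3.2} by a homogenization argument. Given $f \in \C[X]$ of degree $\le d$ vanishing on all $a \in \N^n$ with $a_1 + \cdots + a_n = d$, I want to produce a polynomial of degree $\le d-1$ that vanishes on the larger set $S$ of Lemma \ref{lm3.2}. The natural candidate is to write $f$ in terms of the linear form $\ell := x_1 + x_2 + \cdots + x_n$: formally, consider the remainder of $f$ upon division by $\ell - d$. So first I would write $f = (\ell - d) q + r$ where $q, r \in \C[X]$, with $\deg q \le d-1$, and where $r$ is chosen to have no occurrence of (say) $x_1$ beyond what the division forces — more cleanly, substitute $x_1 = d - x_2 - \cdots - x_n$ into $f$ to get a polynomial $r \in \C[x_2,\ldots,x_n]$ with $f - r \equiv 0 \pmod{\ell - d}$ and $\deg r \le d$.

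Next I would argue that $r$ vanishes on enough points to force $r = 0$ by (a version of) Lemma \ref{lm3.2} in $n-1$ variables. Indeed, for any $a_2, \ldots, a_n \in \N$ with $a_2 + \cdots + a_n \le d$, setting $a_1 := d - a_2 - \cdots - a_n \ge 0$ gives a point $a = (a_1, a_2, \ldots, a_n) \in \N^n$ with $\sum a_i = d$, hence $f(a) = 0$, hence $r(a_2, \ldots, a_n) = f(a) = 0$ since $\ell(a) - d = 0$. So $r$ vanishes on the set $\{b \in \N^{n-1} \mid b_2 + \cdots + b_n \le d\}$, and Lemma \ref{lm3.2} gives $r = 0$. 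Therefore $\ell - d = x_1 + \cdots + x_n - d$ divides $f$, which is the first claim.

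For the homogeneous addendum: if $f$ is homogeneous and $\ell - d \mid f$, write $f = (\ell - d) h$. Comparing degrees, $h$ has degree $\deg f - 1$ but $(\ell - d) h$ has a top-degree part $\ell \cdot h_{\text{top}}$ and a lower part $-d \cdot h_{\text{top}} + \ell \cdot (\text{lower part of } h)$; since $f$ is homogeneous, the lower-degree contribution must vanish, and chasing this down degree by degree forces $h = 0$, hence $f = 0$. (Equivalently: a nonzero homogeneous polynomial is divisible only by homogeneous polynomials, and $\ell - d$ is not homogeneous unless $d = 0$, in which case $f$ vanishes at the origin anyway, contradiction unless $f=0$.)

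I expect the only mild obstacle to be making the division-with-remainder step fully rigorous — specifically, checking that substituting $x_1 = d - x_2 - \cdots - x_n$ really does produce an $r$ with $f \equiv r \pmod{\ell - d}$ and $\deg r \le d$, and that Lemma \ref{lm3.2} applies verbatim in the $(n-1)$-variable setting (it does, since its statement is for arbitrary $n$, and the base case $n=0$ or $n=1$ is trivial). The rest is bookkeeping.
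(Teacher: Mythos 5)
Your proposal is correct and follows essentially the same route as the paper: substitute one variable by $d$ minus the sum of the others, observe that the resulting $(n-1)$-variable polynomial of degree $\le d$ vanishes on the simplex of lattice points from Lemma \ref{lm3.2} and hence is zero, conclude that $\sum_i X_i - d$ divides $f$, and finish by noting this divisor is non-homogeneous for $d > 0$. The only cosmetic difference is that you phrase the substitution as division with remainder by the (monic in $x_1$) polynomial $\ell - d$, which sidesteps the appeal to Gauss' Lemma that the paper makes when passing from divisibility over $\C(X_1,\ldots,X_{n-1})$ back to $\C[X]$.
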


\begin{proof}
If we substitute $X_n = d - \sum_{i=1}^{n-1} X_i$ in $f$, then we
get a polynomial of degree $\le d$ which is zero on account of Lemma
\ref{lm3.2}. Hence $X_n = d - \sum_{i=1}^{n-1} X_i$ is a zero of $f
\in \C(X_1,X_2,\ldots,X_{n-1})[X_n]$ and $f$ is divisible over
$\C(X_1,X_2,\ldots,X_{n-1})$ by $\sum_{i=1}^n X_i - d$. By Gauss'
Lemma, $f$ is divisible over $\C[X]$ by $\sum_{i=1}^n X_i - d$,
which is only homogeneous if $d = 0$. Hence $f = 0$ when $f$ is
homogeneous.
\end{proof}

\begin{lem} \label{lm3.3}
Let $F: \C^n \rightarrow \C^m$ be a polynomial map and $P: \Mat_{m,n}(\C) \rightarrow \C$
be a polynomial of degree $\le d$ in the entries of its input matrix. Fix $\mu \in \C$
and assume that
$$
P\bigg(\sum_{i=1}^d \subss{\jac F}{X}{\alpha_i}\bigg) = \mu
$$
for all $\alpha_1, \alpha_2, \ldots, \alpha_d \in \C^n$. Then for all $s \in \N$
$$
P\bigg(\sum_{i=1}^s b_i \cdot \subss{\jac F}{X}{\alpha_i}\bigg) = \mu = P(d \jac F)
$$
for all $\alpha_1, \alpha_2, \ldots, \alpha_s \in \C^n$ and all
$b_1, b_2, \ldots, b_s \in \C$ such that $\sum_{i=1}^s b_i = d$.

If additionally $P$ is homogeneous, then
$$
P\bigg(\sum_{i=1}^s b_i\cdot \subss{\jac F}{X}{\alpha_i}\bigg) =
\bigg(\frac1d \sum_{i=1}^s b_i\bigg)^{\deg P} \mu =
\bigg(\sum_{i=1}^s b_i\bigg)^{\deg P} P(\jac F)
$$
for all $\alpha_1, \alpha_2, \ldots, \alpha_s \in \C^n$ and all
$b_1, b_2, \ldots, b_s \in \C$.
\end{lem}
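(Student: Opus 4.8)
The plan is to prove the first displayed identity and then deduce the second (homogeneous) one from it by a substitution argument. Fix $s \in \N$, points $\alpha_1,\ldots,\alpha_s \in \C^n$, and scalars $b_1,\ldots,b_s \in \C$ with $\sum_i b_i = d$. The first thing I would observe is that the entries of $\jac F$ are polynomials in $X$, so the map $A \mapsto \jac F$ evaluated at a point is \emph{not} affine in general; hence Proposition \ref{quadr}'s trick does not apply directly. Instead, I would view the left-hand side as a polynomial in auxiliary variables. Concretely, introduce indeterminates $c_1,\ldots,c_d$ and points $\xi_1,\ldots,\xi_d \in \C^n$ and consider $Q(c_1,\ldots,c_d) := P\big(\sum_{k=1}^d c_k \cdot \subss{\jac F}{X}{\xi_k}\big)$ for fixed $\xi_k$; this is a polynomial of degree $\le d$ in $c_1,\ldots,c_d$. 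The hypothesis says $Q(1,1,\ldots,1) = \mu$ for \emph{every} choice of $\xi_1,\ldots,\xi_d$.

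The key step is to reduce the general weighted sum $\sum_{i=1}^s b_i \cdot \subss{\jac F}{X}{\alpha_i}$ to a sum of exactly $d$ matrices of the form $\subss{\jac F}{X}{\cdot}$ with coefficient-pattern $(1,\ldots,1)$, at least when the $b_i$ are natural numbers summing to $d$: in that case the weighted sum is literally $\sum_{k=1}^d \subss{\jac F}{X}{\xi_k}$ where the list $(\xi_1,\ldots,\xi_d)$ repeats $\alpha_i$ exactly $b_i$ times, so the hypothesis gives $P(\cdot) = \mu$ on the nose. To pass from integer $b_i$ to arbitrary complex $b_i$, I would fix $\alpha_1,\ldots,\alpha_s$ and regard $b_1,\ldots,b_s$ as variables: the function $(b_1,\ldots,b_s) \mapsto P\big(\sum_i b_i \subss{\jac F}{X}{\alpha_i}\big) - \mu$ is a polynomial of degree $\le d$ in the $b_i$ which, by the integer case, vanishes on every lattice point of the hyperplane $\{b \in \N^s \mid \sum_i b_i = d\}$. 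By Lemma \ref{lm3.2} and Corollary \ref{cor3.2} (applied after the substitution $b_s = d - \sum_{i<s} b_i$, or directly to the restriction), this polynomial vanishes identically on the hyperplane $\sum_i b_i = d$; specializing to $b = (d,0,\ldots,0)$ with $\alpha_1$ replaced by an arbitrary point yields $P(d\jac F) = \mu$ as a polynomial identity in $X$ as well, and specializing to arbitrary complex $b$ with $\sum b_i = d$ gives the first assertion.

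For the homogeneous case, suppose $P$ is homogeneous of degree $e = \deg P$ and let $b_1,\ldots,b_s \in \C$ be arbitrary with $\sigma := \sum_i b_i$. If $\sigma \ne 0$, set $b_i' := (d/\sigma) b_i$, so $\sum_i b_i' = d$; then by homogeneity
\[
P\Big(\sum_{i=1}^s b_i \cdot \subss{\jac F}{X}{\alpha_i}\Big)
= P\Big(\tfrac{\sigma}{d}\sum_{i=1}^s b_i' \cdot \subss{\jac F}{X}{\alpha_i}\Big)
= \Big(\tfrac{\sigma}{d}\Big)^{e} P\Big(\sum_{i=1}^s b_i' \cdot \subss{\jac F}{X}{\alpha_i}\Big)
= \Big(\tfrac{\sigma}{d}\Big)^{e}\mu,
\]
and since $P(d\jac F) = d^e P(\jac F) = \mu$, this equals $\sigma^{e} P(\jac F)$; the case $\sigma = 0$ follows by continuity (the two sides are polynomials in $b_1,\ldots,b_s$ agreeing on the Zariski-dense set $\sigma \ne 0$). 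The main obstacle I anticipate is purely bookkeeping: making sure that when I pass to the integer case I am allowed to let the repeated points and the coefficient $d$ in $P(d\jac F)$ be treated as polynomial identities in $X$ rather than pointwise identities, which is why invoking Corollary \ref{cor3.2} in its "divisibility" form (so that the conclusion is about polynomials, not just values) is essential.
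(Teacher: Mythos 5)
Your proof is correct and, for the main assertion, follows essentially the same route as the paper: fix the $\alpha_i$, view $P\big(\sum_i b_i\cdot\subss{\jac F}{X}{\alpha_i}\big)-\mu$ as a polynomial of degree $\le d$ in $b$, observe it vanishes at the lattice points of $\sum_i b_i=d$ by repeating points in the hypothesis, and invoke Corollary \ref{cor3.2} to get vanishing on the whole hyperplane. The only divergence is in the homogeneous case: the paper applies Corollary \ref{cor3.2} once more, to the homogeneous polynomial $g(Y)=P\big(\sum_i Y_i\cdot\subss{\jac F}{X}{\alpha_i}\big)-\big(\frac1d\sum_i Y_i\big)^{\deg P}\mu$, concluding $g=0$ outright, whereas you rescale the coefficients by $d/\sigma$ and use homogeneity of $P$ plus Zariski density to cover $\sigma=0$. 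Both arguments are valid and of comparable length; yours avoids checking homogeneity of the difference $g$, while the paper's avoids the separate limiting step for $\sigma=0$.
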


\begin{proof}
Since $P\big(\sum_{i=1}^d \subss{\jac F}{X}{\alpha_i}\big) = \mu$ is constant,
$$
\mu = P\bigg(\sum_{i=1}^d \subss{\jac F}{X}{\alpha_i}\bigg) = P(d \jac F)
$$
for all $\alpha_i \in \C^n$. Take $\alpha_1, \alpha_2, \ldots, \alpha_s \in \C^n$ and let
$$
f(Y_1,Y_2,\ldots,Y_s) := P\bigg(\sum_{i=1}^s Y_i\cdot\subss{\jac F}{X}{\alpha_i}\bigg)
- \mu
$$
Then $\deg f(Y_1,Y_2,\ldots,Y_s) \le d$, and for all
$b \in \N^s$ such that $\sum_{i=1}^s b_i = d$, we have
\begin{align*}
f(b) &= P\bigg(\sum_{i=1}^s b_i\cdot\subss{\jac F}{X}{\alpha_i}\bigg) - \mu \\
&= P\bigg(\sum_{i=1}^s \sum_{j=1}^{b_i} \subss{\jac F}{X}{\alpha_i}\bigg) - \mu = 0
\end{align*}
By Corollary \ref{cor3.2}, $\sum_{i=1}^s Y_i - d \;\big|\;
f(Y_1,Y_2,\ldots,Y_s) - \mu$, whence
$$
0 \;\bigg|\; P\bigg(\sum_{i=1}^s b_i\cdot \subss{\jac F}{X}{\alpha_i}\bigg) - \mu
$$
for all $b \in \C^s$ such that $\sum_{i=1}^s b_i = d$. This gives the first assertion of
Lemma \ref{lm3.3}.

Assume $P$ is homogeneous. Then
\begin{align*}
g(Y_1, Y_2, \ldots, Y_s)
&:= P\bigg(\sum_{i=1}^s Y_i\cdot \subss{\jac F}{X}{\alpha_i}\bigg) -
\bigg(\frac1d\sum_{i=1}^s Y_i\bigg)^{\deg P} \mu \\
&\;= P\bigg(\sum_{i=1}^s Y_i\cdot \subss{\jac F}{X}{\alpha_i}\bigg) -
\bigg(\sum_{i=1}^s Y_i\bigg)^{\deg P} P(\jac F)
\end{align*}
is homogeneous as well. Since $g$ vanishes on $b$ for all
$b \in \N^s$ such that $\sum_{i=1}^s b_i = d$, we obtain from Corollary \ref{cor3.2}
that $g = 0$, which completes the proof of Lemma \ref{lm3.3}.
\end{proof}

\begin{thm} \label{th2}
Let $m \ge n$ and $F: \C^n \rightarrow \C^n$ be a polynomial map
such that for a fixed $\mu \in \C$, we have
$$
\det\bigg(\sum_{i=1}^m \subss{\jac F}{X}{\alpha_i}\bigg) = \mu
$$
for all $\alpha_1, \alpha_2, \ldots, \alpha_m \in \C^n$. Then $\mu = \det (m \jac F)
= m^n \det(\jac F)$
and for all $s \in \N$
$$
\det\bigg(\sum_{i=1}^s b_i\cdot \subss{\jac F}{X}{\alpha_i}\bigg)
= \bigg(\frac1m \sum_{i=1}^s b_i\bigg)^n \mu
= \bigg(\sum_{i=1}^s b_i\bigg)^n \det (\jac F)
$$
for all $\alpha_1, \alpha_2, \ldots, \alpha_s \in \C^n$ and all
$b_1, b_2, \ldots, b_s \in \C$. Furthermore, $F$ is an invertible polynomial map
in case $\det \jac F \ne 0$.
\end{thm}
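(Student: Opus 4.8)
The plan is to observe that the hypothesis of the theorem is precisely the hypothesis of Lemma \ref{lm3.3} applied to the polynomial $P := \det$ on $\Mat_{n,n}(\C)$, with the number of summands $m$ playing the role of the integer ``$d$'' of that lemma. This is legitimate because $\det$ is a \emph{homogeneous} polynomial of degree exactly $n$ in the matrix entries, and we are given $m \ge n$, so $\det$ has degree $\le m$; moreover the matrix $\sum_{i=1}^m \subss{\jac F}{X}{\alpha_i}$ is square, so everything is well defined.

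First I would apply Lemma \ref{lm3.3} with $P = \det$ and with ``$d$'' $= m$. Its first conclusion gives $\mu = \det(m\jac F) = m^n\det\jac F$ at once, so in particular $\det\jac F = \mu/m^n$ is a constant. Its second conclusion, available since $\det$ is homogeneous, gives for every $s\in\N$, all $\alpha_i\in\C^n$ and all $b_i\in\C$
$$
\det\bigg(\sum_{i=1}^s b_i\cdot\subss{\jac F}{X}{\alpha_i}\bigg)
= \bigg(\frac1m\sum_{i=1}^s b_i\bigg)^n\mu
= \bigg(\sum_{i=1}^s b_i\bigg)^n\det\jac F ,
$$
which is exactly the displayed identity of the theorem. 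So the first two assertions are nothing more than a substitution into Lemma \ref{lm3.3}.

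For the last assertion, suppose $\det\jac F\ne 0$ and set $e:=\deg F$. If $e\le 1$ then $F$ is an affine map whose (constant) Jacobian matrix is invertible, hence $F$ is bijective with affine-linear inverse (the case $e=0$ being vacuous). If $e\ge 2$, I would specialize the identity just obtained to $s=e-1$ and $b_1=b_2=\cdots=b_{e-1}=1$, obtaining
$$
\det\bigg(\sum_{i=1}^{e-1}\subss{\jac F}{X}{\alpha_i}\bigg)=(e-1)^n\det\jac F\ne 0
$$
for all $\alpha_1,\ldots,\alpha_{e-1}\in\C^n$; hence $\rk\big(\sum_{i=1}^{e-1}\subss{\jac F}{X}{\alpha_i}\big)=n$ for every choice of the $\alpha_i$. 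Taking $\lambda_1=\cdots=\lambda_{e-1}=1$, so that $\sum_{i\in I}\lambda_i=|I|\ne 0$ for each nonempty $I\subseteq\{1,\ldots,e-1\}$, Corollary \ref{allinj} (with its codomain dimension equal to $n$, since here $F:\C^n\to\C^n$) then yields that $F$ is an invertible polynomial map.

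There is no serious obstacle here; the proof is a direct assembly of Lemma \ref{lm3.3} and Corollary \ref{allinj}. The only point requiring attention is the overloading of the letter ``$d$'': in Lemma \ref{lm3.3} it is the degree bound of the test polynomial $P$ (we take it to be $m$), whereas in Corollary \ref{allinj} it is $\deg F$ (we call it $e$), and these are unrelated; consequently one must dispose separately of the degenerate case $e\le 1$, in which Corollary \ref{allinj} has nothing to say but invertibility is evident.
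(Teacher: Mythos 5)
Your proof is correct and is essentially the paper's own argument: the paper likewise obtains the identities by taking $P=\det$ and $d=m$ in Lemma \ref{lm3.3}, and then deduces invertibility by specializing to $s=\deg F-1$, $b_i=1$ and invoking Corollary \ref{allinj}. Your separate treatment of the degenerate case $\deg F\le 1$ is a small point of care that the paper leaves implicit.
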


\begin{proof}
To obtain the first assertion, take $P = \det$, $d = m$ and $m = n$ in Lemma
\ref{lm3.3}. By taking $s = \deg F - 1$ and $b_i = 1$ for all $i$ in this assertion,
it follows from Corollary \ref{allinj} that $F$ is an invertible polynomial map in
case $\det \jac F \ne 0$.
\end{proof}

\begin{thm}
Assume $H: \C^n \rightarrow \C^n$ is a polynomial map and define
$$
M(\alpha_1,\alpha_2,\ldots,\alpha_s) := \subss{\jac H}{X}{\alpha_1} +
\subss{\jac H}{X}{\alpha_2} + \cdots + \subss{\jac H}{X}{\alpha_s}
$$
If for some $m \ge d$, the sum of the principal
minors of size $d$ of $M(\alpha_1,\alpha_2,\ldots,\alpha_m)$ is zero
for all $\alpha_i \in \C^n$, then for all $s \in \N$, the sum of the principal
minors of size $d$ of
\begin{equation}
b_1 \cdot \subss{\jac H}{X}{\alpha_1} +
b_2 \cdot \subss{\jac H}{X}{\alpha_2} + \cdots + b_s \cdot \subss{\jac H}{X}{\alpha_s} \label{bsum}
\end{equation}
is zero as well, for all $b_i \in \C$ and all $\alpha_i \in \C^n$.
If for some $m \ge d$, the trace of
$M(\alpha_1,\alpha_2,\ldots,\allowbreak \alpha_m)^d$ is zero
for all $\alpha_i \in \C^n$, then for all $s \in \N$, the trace of the $d$-the power
of (\ref{bsum}) is
zero as well, for all $b_i \in \C$ and all $\alpha_i \in \C^n$.
\end{thm}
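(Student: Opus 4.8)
The plan is to handle both assertions in one stroke by exactly the device behind Lemma~\ref{lm3.3} and Corollary~\ref{cor3.2}, the only new feature being that one works with non\-negative integer weights summing to $m$ rather than to $d$. In effect the statement is the ``$\mu=0$, $m\ge d$, $P$ homogeneous'' variant of Lemma~\ref{lm3.3}.

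First I set $P\colon \Mat_{n,n}(\C)\to\C$ to be, in the first case, the map sending a matrix to the sum of its principal minors of size $d$, and in the second case the map $M\mapsto \operatorname{tr}(M^d)$. In either case $P$ is \emph{homogeneous} of degree exactly $d$ in the entries of its argument: a principal minor of size $d$ is a $d\times d$ determinant, hence homogeneous of degree $d$, and each entry of $M^d$ is homogeneous of degree $d$ in the entries of $M$. (If $d>n$ the first $P$ is identically zero and there is nothing to prove.) With this notation the hypothesis reads $P\big(M(\alpha_1,\dots,\alpha_m)\big)=0$ for all $\alpha_1,\dots,\alpha_m\in\C^n$.

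Now fix $s\in\N$ and $\alpha_1,\dots,\alpha_s\in\C^n$, and put
$$f(Y_1,\dots,Y_s):=P\Big(\textstyle\sum_{i=1}^s Y_i\cdot\subss{\jac H}{X}{\alpha_i}\Big)\in\C[Y_1,\dots,Y_s].$$
Since the entries of $\sum_i Y_i\,\subss{\jac H}{X}{\alpha_i}$ are linear forms in $Y_1,\dots,Y_s$ and $P$ is homogeneous of degree $d$, the polynomial $f$ is homogeneous of degree $d$. For every $b=(b_1,\dots,b_s)\in\N^s$ with $b_1+\cdots+b_s=m$ one has
$$\sum_{i=1}^s b_i\cdot\subss{\jac H}{X}{\alpha_i}=\sum_{i=1}^s\sum_{j=1}^{b_i}\subss{\jac H}{X}{\alpha_i}=M\big(\underbrace{\alpha_1,\dots,\alpha_1}_{b_1},\dots,\underbrace{\alpha_s,\dots,\alpha_s}_{b_s}\big),$$
a sum of exactly $m$ evaluated Jacobian matrices of $H$, whence $f(b)=0$ by hypothesis. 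Thus $f$ is a homogeneous polynomial of degree $d\le m$ in $s$ variables that vanishes at every $b\in\N^s$ with $\sum_i b_i=m$, so Corollary~\ref{cor3.2}, applied with its parameter ``$d$'' taken to be $m$ and its number of variables ``$n$'' taken to be $s$, yields $f=0$. Therefore $P\big(\sum_{i=1}^s b_i\cdot\subss{\jac H}{X}{\alpha_i}\big)=0$ for all $(b_1,\dots,b_s)\in\C^s$, and since $\alpha_1,\dots,\alpha_s$ were arbitrary this is precisely the claimed conclusion in both cases.

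The one point needing care is the invocation of Corollary~\ref{cor3.2}: its hypothesis requires the polynomial's degree to be at most the chosen parameter, and it is exactly the standing assumption $m\ge d$, together with the homogeneity of $P$ (which pins $\deg f$ down to $d$ rather than merely $\le d$), that makes the application legitimate; were $m<d$, the argument would collapse. Everything else is the elementary observation that a weighted sum of evaluated Jacobians with non\-negative integer coefficients summing to $m$ is literally an $m$\-fold sum of evaluated Jacobians, so the vanishing hypothesis applies to it verbatim. No genuinely hard step arises.
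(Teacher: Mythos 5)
Your proof is correct and follows essentially the same route as the paper: the paper simply cites the homogeneous case of Lemma~\ref{lm3.3} with its parameter $d$ replaced by $m$ and $\mu = 0$, whereas you inline that lemma's proof by defining $f(Y)=P\big(\sum_i Y_i\cdot\subss{\jac H}{X}{\alpha_i}\big)$ and applying Corollary~\ref{cor3.2} directly. Your version also silently corrects a typo in the paper's one-line proof (which says ``minors of size $m$'' and ``trace of the $m$-th power'' where $d$ is meant), and your remark that $m\ge d$ is exactly what makes the degree hypothesis of Corollary~\ref{cor3.2} hold is the right point to emphasize.
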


\begin{proof}
Take for $P$ the sum of the principal minors of size $m$ or the trace
of the $m$-th power, respectively. By Lemma \ref{lm3.3}, $P((\ref{bsum}))$
is divisible by $\mu := P(m \jac H) =
P(M(\alpha_1,\alpha_2,\ldots,\allowbreak \alpha_m)) = 0$.
\end{proof}

\begin{rem}
Let $F=X+H$ such that the Jacobian matrix $\jac H$ is additive-nilpotent. Then
for all $m \in \N$,
$\sum^m_{i=1}\subss{\jac F}{X}{\alpha_i}$ is invertible for all $\alpha_i\in
\mathbb{C}^n$. We shall show below that the converse holds when
$H$ does not have linear terms. But
the converse is not true in general. For example, let $F(X)=X+H$,
where $H=(-X_1+X_2,X_1-X_2+X_2^2)$. Then
$$
\jac H=\begin{pmatrix}
 -1&1\\1&2X_2-1
\end{pmatrix} \qquad \mbox{and} \qquad \jac F=\begin{pmatrix}
 0&1\\1&2X_2
\end{pmatrix}
$$
such that $\jac H$ is not even nilpotent and
$\sum^{2}_{i=1}\subss{\jac F}{X}{\alpha_i}$ is invertible for all $\alpha_1, \alpha_2\in
\mathbb{C}^2$.
\end{rem}

\begin{prop}\label{dpr3}
Assume $F: \mathbb{C}^n\rightarrow \mathbb{C}^n$ is a polynomial map of the
form $F = L + H$, such that $L$ is invertible and $\deg L = 1$.
Then for all $s \in \N$, all $b_i \in \C$, and all
$\alpha_i \in \C^n$, the following statements are equivalent to each other.
\begin{enumerate}
\item[(1)] For all $\mu \in \C$, we have
$$
\det\bigg(\mu \cdot \jac L + \sum_{i=1}^s b_i \cdot
\subss{\jac F}{X}{\alpha_i}\bigg) =
\bigg(\mu + \sum_{i=1}^s b_i\bigg)^n \cdot \det (\jac L)
$$
\item[(2)] $\sum_{i=1}^s b_i
\cdot \bsubss{\big}{\jac (L^{-1} \circ H)}{X}{\alpha_i}$ is nilpotent.
\end{enumerate}
\end{prop}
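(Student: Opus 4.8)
The plan is to reduce statement~(1) to a purely linear-algebraic nilpotency condition and then to recognize that condition as statement~(2). First I would fix notation. Since $\deg L = 1$, the matrix $A := \jac L$ is a constant invertible matrix and $\jac F = A + \jac H$; moreover $L^{-1}$ is again an affine map, with constant Jacobian $A^{-1}$, so the chain rule gives $\jac(L^{-1}\circ H) = A^{-1}\cdot\jac H$ and hence $\subss{\jac(L^{-1}\circ H)}{X}{\alpha_i} = A^{-1}B_i$ with the abbreviation $B_i := \subss{\jac H}{X}{\alpha_i}$. Writing $C := \sum_{i=1}^s b_i B_i$, the matrix occurring in~(2) is $A^{-1}C$, while the matrix inside the determinant in~(1) is
\[
\mu\cdot\jac L + \sum_{i=1}^s b_i\cdot\subss{\jac F}{X}{\alpha_i}
= \mu A + \sum_{i=1}^s b_i(A + B_i)
= \Big(\mu + \sum_{i=1}^s b_i\Big)A + C .
\]

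Next I would factor out $\det A \ne 0$. Put $\nu := \mu + \sum_{i=1}^s b_i$, which ranges over all of $\C$ as $\mu$ does. Since $\nu A + C = A(\nu I + A^{-1}C)$, statement~(1) says precisely that $\det A\cdot\det(\nu I + A^{-1}C) = \nu^n\det A$ for all $\nu\in\C$, that is,
\[
\det(\nu I + A^{-1}C) = \nu^n \qquad\text{for all }\nu\in\C .
\]
Now $\det(\nu I + M) = \prod_{j=1}^n(\nu + \lambda_j)$, where $\lambda_1,\ldots,\lambda_n$ are the eigenvalues of $M := A^{-1}C$ counted with multiplicity. As $\C$ is infinite, this polynomial in $\nu$ equals $\nu^n$ for every $\nu\in\C$ if and only if it equals $\nu^n$ identically, which holds if and only if $\lambda_1 = \cdots = \lambda_n = 0$, that is, if and only if $M$ is nilpotent. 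Since $M = A^{-1}C = \sum_{i=1}^s b_i\cdot\subss{\jac(L^{-1}\circ H)}{X}{\alpha_i}$, the nilpotency of $M$ is exactly statement~(2), so the chain of equivalences closes.

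I do not expect a genuine obstacle: the whole argument is a short string of equivalences, carried out for each fixed choice of $s$, of the $b_i$ and of the $\alpha_i$. The points that deserve a little care are the chain-rule identity $\jac(L^{-1}\circ H) = A^{-1}\jac H$ for the affine map $L^{-1}$, the passage from ``the polynomial identity holds for every $\nu\in\C$'' to ``it holds identically'' (legitimate because $\C$ is infinite), and the standard fact that a complex square matrix $M$ is nilpotent precisely when all its eigenvalues vanish, equivalently when $\det(\nu I + M) = \nu^n$. One could also phrase this last equivalence through the vanishing of the coefficients $\operatorname{tr}(\Lambda^k M)$ of $\det(\nu I + M)$ combined with Newton's identities, but the eigenvalue formulation is the most transparent one.
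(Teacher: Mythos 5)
Your proof is correct and follows essentially the same route as the paper's: rewrite the determinant identity as a polynomial identity in a shifted variable (your $\nu = \mu + \sum_i b_i$ versus the paper's substitution $T \mapsto T - \sum_i b_i$), cancel the $\sum_i b_i \cdot \jac L$ contribution, factor out $\det(\jac L)$, and identify the resulting condition $\det(\nu I + (\jac L)^{-1}\sum_i b_i \subss{\jac H}{X}{\alpha_i}) = \nu^n$ with nilpotency via the characteristic polynomial. The only cosmetic difference is that you make the chain-rule identity $\jac(L^{-1}\circ H) = (\jac L)^{-1}\jac H$ and the eigenvalue characterization of nilpotency explicit, which the paper leaves implicit.
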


\begin{proof}
Assume (1). Since the equality of (1) holds for all $\mu \in \C$, we obtain
$$
\det\bigg(T \cdot \jac L + \sum_{i=1}^s b_i \cdot
\subss{\jac F}{X}{\alpha_i}\bigg) =
\bigg(T + \sum_{i=1}^s b_i\bigg)^n \cdot \det (\jac L)
$$
which is equivalent to
$$
\det\bigg(\bigg(T - \sum_{i=1}^s b_i\bigg) \cdot \jac L + \sum_{i=1}^s b_i \cdot
\subss{\jac L + \jac H}{X}{\alpha_i}\bigg) = T^n \cdot \det (\jac L)
$$
and
$$
\det\bigg(T \cdot \jac L + \sum_{i=1}^s b_i \cdot
\subss{\jac H}{X}{\alpha_i}\bigg) = T^n \cdot \det (\jac L)
$$
By dividing both sides by $\det (\jac L)$, we obtain
$$
\det\bigg(T + \sum_{i=1}^s b_i \cdot
(\jac L)^{-1} \cdot \subss{\jac H}{X}{\alpha_i}\bigg) = T^n
$$
which implies (2). The converse is similar.
\end{proof}

\noindent
Let $F=X+H$ such that $\jac H$ is additive-nilpotent. Then
$\sum_{i=1}^{m}\subss{\jac\wtilde{F}}{X}{\alpha_i}$ is
invertible for all $\alpha_i\in \mathbb{C}^n$ and all positive integers
$m$, where $\wtilde{F}=L_1\circ F\circ L_2$ for invertible linear maps $L_1$ and $L_2$.
We next prove that the converse holds.

\begin{thm}\label{dth3}
For a polynomial map $F: \mathbb{C}^n\rightarrow \mathbb{C}^n$ the following statements are equivalent.
\begin{enumerate}
\item[(1)] $\sum^{n}_{i=1}\subss{\jac F}{X}{\alpha_i}$ is invertible for all
$\alpha_i\in \mathbb{C}^n$;
\item[(2)] $F=L\circ(X+H)$, where $H$ has no linear terms,
the linear part $L$ of $F$ is invertible
and $\jac H$ is additive-nilpotent;
 \item[(3)] $F=(X+H)\circ L$, where $H$ has no linear terms,
the linear part $L$ of $F$ is invertible
and $\jac H$ is additive-nilpotent;
 \item[(4)] $F=L_1\circ(X+H)\circ L_2$, where $L_1$ and $L_2$ are
invertible maps of degree one and $\jac H$ is additive-nilpotent.
 \end{enumerate}
\end{thm}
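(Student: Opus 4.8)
The strategy is to establish the cycle $(1)\Rightarrow(2)\Rightarrow(4)\Rightarrow(1)$ together with $(2)\Leftrightarrow(3)$. Three of these are essentially formal. For $(2)\Rightarrow(4)$ and $(3)\Rightarrow(4)$ one takes the missing outer factor to be the identity. For $(4)\Rightarrow(1)$ the chain rule gives $\subss{\jac F}{X}{\alpha_i}=\jac L_1\cdot\subss{\jac(X+H)}{X}{L_2(\alpha_i)}\cdot\jac L_2$, so that $\sum_{i=1}^{n}\subss{\jac F}{X}{\alpha_i}=\jac L_1\cdot\bigl(nI+\sum_{i=1}^{n}\subss{\jac H}{X}{L_2(\alpha_i)}\bigr)\cdot\jac L_2$; the inner factor is $nI$ plus a nilpotent matrix by additive-nilpotence with $n$ summands, hence invertible, and conjugation by the invertible constant matrices $\jac L_1,\jac L_2$ keeps it invertible. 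For $(2)\Leftrightarrow(3)$: if $L$ is a linear invertible map with matrix $A$, then $L\circ(X+H)=(X+H')\circ L$ exactly when $H'=L\circ H\circ L^{-1}$, and then $\sum_i\subss{\jac H'}{X}{\alpha_i}=A\bigl(\sum_i\subss{\jac H}{X}{A^{-1}\alpha_i}\bigr)A^{-1}$, so additive-nilpotence — and the property of having no linear terms — is transported between $H$ and $H'$; since ``$L$ is the linear part of $F$'' is a statement about $F$ alone, $(2)$ and $(3)$ are equivalent.

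The real content is $(1)\Rightarrow(2)$, and the first move is to upgrade invertibility to constancy. The polynomial $\det\bigl(\sum_{i=1}^{n}\subss{\jac F}{X}{\alpha_i}\bigr)$, viewed as a polynomial in the $n^{2}$ coordinates of $\alpha_1,\dots,\alpha_n$, has no zero on $\C^{n^{2}}$ by hypothesis $(1)$; as $\C$ is algebraically closed it must be a nonzero constant $\mu$. Theorem \ref{th2}, applied with its parameter $m$ equal to $n$, then gives $\mu=n^{n}\det\jac F$, so $\det\jac F=\mu/n^{n}\in\C^{*}$, $F$ is an invertible polynomial map, and
$$
\det\bigg(\sum_{i=1}^{s}b_i\cdot\subss{\jac F}{X}{\alpha_i}\bigg)=\bigg(\sum_{i=1}^{s}b_i\bigg)^{n}\det\jac F
$$
for all $s\in\N$, all $b_i\in\C$ and all $\alpha_i\in\C^{n}$. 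Taking $s=n$, all $\alpha_i=0$ and all $b_i=1$ shows $\det\subss{\jac F}{X}{0}\ne0$; but $\subss{\jac F}{X}{0}$ is precisely the matrix $A$ of the linear part $L$ of $F$, so $L$ is invertible. Put $H:=A^{-1}\circ F-X$; then $F=L\circ(X+H)$, and $H$ has no linear terms since the degree-one part of $A^{-1}\circ F$ is $A^{-1}(AX)=X$.

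It remains to see $\jac H$ is additive-nilpotent. Since $\jac H=A^{-1}\jac F-I$, for any $\ell\in\N$ and $\alpha_1,\dots,\alpha_\ell\in\C^{n}$ I would compute
$$
\det\bigg(TI+\sum_{i=1}^{\ell}\subss{\jac H}{X}{\alpha_i}\bigg)=\det(A)^{-1}\det\bigg((T-\ell)\,\subss{\jac F}{X}{0}+\sum_{i=1}^{\ell}\subss{\jac F}{X}{\alpha_i}\bigg).
$$
Applying the scaling identity above with the $\ell+1$ points $0,\alpha_1,\dots,\alpha_\ell$ and regarding it as an identity of polynomials in the coefficients (both sides agree for all complex values), substitute $T-\ell$ for the coefficient at $0$ and $1$ for the others to get $\det\bigl((T-\ell)\subss{\jac F}{X}{0}+\sum_i\subss{\jac F}{X}{\alpha_i}\bigr)=T^{n}\det\jac F$; since $\det A=\det\subss{\jac F}{X}{0}=\det\jac F$, the right-hand side of the displayed equation collapses to $T^{n}$. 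Hence $\sum_i\subss{\jac H}{X}{\alpha_i}$ is nilpotent, and $\ell,\alpha_i$ being arbitrary, $\jac H$ is additive-nilpotent, which is $(2)$. (One can also route this last step through Proposition \ref{dpr3} applied to $F=L+H$.)

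I expect the only genuinely delicate point to be the opening move of $(1)\Rightarrow(2)$: passing from ``the sum of $n$ evaluated Jacobians is always invertible'' to ``its determinant is a nonzero constant'', which is exactly where the algebraic closedness of $\C$ is indispensable and without which Theorem \ref{th2} would not apply. The remainder is bookkeeping, the two things to watch being that ``linear part'' must be read as the homogeneous degree-one part, so that a possible constant term of $F$ gets harmlessly absorbed into $H$ (which is also what makes the conjugation step in $(2)\Leftrightarrow(3)$ go through), and that it is $\jac H=A^{-1}\jac F-I$, rather than $\jac F$, whose sums must be shown nilpotent.
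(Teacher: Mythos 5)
Your proposal is correct and follows essentially the same route as the paper: use algebraic closedness to turn condition (1) into constancy of the determinant, invoke Theorem \ref{th2} for the scaling identity, peel off the linear part $L=\subss{\jac F}{X}{0}$, and deduce additive-nilpotence of $\jac(L^{-1}\circ(F-L))$ from the identity $\det\big(T\,\jac L+\sum_i b_i\subss{\jac F}{X}{\alpha_i}\big)=(T+\sum_i b_i)^n\det\jac L$ --- which is exactly the content of Proposition \ref{dpr3}, here reproved inline. The remaining implications ((4)$\Rightarrow$(1) via the chain rule and the conjugation argument linking (2) and (3)) match the paper's, differing only in which trivial edges of the implication graph are drawn.
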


\begin{proof}
Since (3) $\Rightarrow$ (4) is trivial, the following three implications remain
to be proved.
\begin{description}

\item[(4) \imp (1)] Assume (4). Since $\jac H$ is additive-nilpotent, (1) holds with
$X + H$ instead of $F$. Since (1) is not affected by compositions with
translations and invertible linear maps, and $F$ can be obtained from
$X + H$ in that manner, (1) follows.

\item[(1) \imp (2)] Assume (1). By the fundamental theorem of algebra,
the determinant of $\sum^{n}_{i=1}\subss{\jac F}{X}{\alpha_i}$ is a nonzero constant
which does not depend on $\alpha_1,\alpha_2,\ldots,\alpha_n$. Let $L$
be the linear part of $F$ . By theorem
\ref{th2}, we obtain that $\det \jac F = \det \subss{\jac F}{X}{0} = \det \jac L$
and that (1) of proposition \ref{dpr3} holds for all $s \in \N$, all $b_i \in \C$,
and all $\alpha_i \in \C^n$. Hence the Jacobian of
$H := L^{-1} \circ (F-L)$
is additive-nilpotent on account of proposition \ref{dpr3}, which gives the
desired result.

\item[(2) \imp (3)] This follows from the fact that
$F=L\circ (X+H)= \big(X+(L\circ H\circ L^{-1})\big)\circ L$ and the Jacobian of
$L\circ H\circ L^{-1}$ is also additive-nilpotent. \qedhere

\end{description}
\end{proof}

\begin{rem}
A polynomial map $F=(F_1,\ldots,F_n)$ is called triangular if its Jacobian matrix
is triangular, i.e.\@ either above or below the main diagonal, all entries of $\jac F$
are zero. The Jacobian matrix $\jac F$ of a
triangular invertible polynomial map $F$ can only have nonzero constants on the main diagonal, and thus for all invertible linear maps $L_1$ and $L_2$, $\sum_{i=1}^{n}
\bbsubss{\big}{{}}{\jac(L_1\circ F\circ L_2)}{X}{\alpha_i}$ is invertible for all $\alpha_i\in \mathbb{C}^n$. However, a polynomial map satisfying the conditions of Theorem \ref{dth3} is not necessarily a composition of a triangular map and two linear maps.
Indeed, in \cite{Meisters}, it was shown that in dimension 5 and up,
Keller maps $X + H$ with $H$ quadratic homogeneous do not necessarily have the property that
$\jac H$ is strongly nilpotent. But on account of Proposition \ref{quadr}, such maps satisfy
property (1) of Theorem \ref{dth3}.

In \cite{homokema}, all those maps such that
$\jac H$ is not strongly nilpotent are determined in dimension 5. $H$ is either of the form
$$
H = L^{-1} \circ \left(\left(\begin{array}{c}
0 \\ \lambda X_1^2 \\ X_2 X_4 \\ X_1 X_3 - X_2 X_5 \\ X_1 X_4
\end{array} \right) + \left( \begin{array}{c}
0 \\ 0 \\ p(X_1,X_2) \\ q(X_1,X_2) \\ r(X_1,X_2)
\end{array} \right) \right) \circ L
$$
where $\lambda \in \{0,1\}$, $L$ is linear and $p,q,r \in \C[x_1,x_2]$, or
of the form
$$
H = L^{-1} \circ \left(\left(\begin{array}{c}
0 \\ X_1 X_3 \\ X_2^2 - X_1 X_4 \\ 2 X_2 X_3 - X_1 X_5 \\ X_3^2
\end{array} \right) + \left( \begin{array}{c}
0 \\ \lambda_2 X_1^2 \\ \lambda_3 X_1^2 \\ \lambda_4 X_1^2 \\ \lambda_5 X_1^2
\end{array} \right) \right) \circ L
$$
where $L$ is linear and $\lambda_i \in \C$. One can show that in both cases, the columns
of $\jac (L \circ H \circ L^{-1})$ are linearly independent over $\C$,
something that cannot be counteracted with compositions with invertible linear maps.
Hence the columns of $\jac(L_1\circ H\circ L_2)$ are linearly independent over $\C$
for all invertible maps  $L_i$. $\jac(L_1\circ H\circ L_2)$ is exactly the
linear part of $\jac(L_1\circ F\circ L_2)$,
thus $\jac(L_1\circ F\circ L_2)$ can only be triangular if its main diagonal is not constant
on one of its ends. This is however not possible since $L_1\circ F\circ L_2$
is invertible.
\end{rem}


\end{document}